\theoremstyle{plain}
\newtheorem{theorem}{Theorem}[section]
\newtheorem{definition}{Definition}
\newtheorem{proposition}{Proposition}
\def\Z{\mathbb Z}
\def\M1{\mc{M}_1}
\def\L{\mathcal L}
\def\mc{\mathcal}
\def\xyPro{
\begin{xy}
(0 , -7.5)*{\sigma(e_3)} , (12 , -7.5)*{\sigma(e_4)}, (0 ,6.5)*{\sigma(e_1)}, (12 ,6.5 )*{\sigma(e_2)}, 
{\ar(0 ,-6.5 )*{\hole}; (6 ,-2.5)*{}};
{\ar(12 ,-6.5)*{\hole};(6 ,-2.5)*{}};
{\ar@{={~*=<.5pt>}}(6 , -2.5)*{};(6 ,2.5)*{}};
{\ar(6 , 2.5)*{};(0 , 5.5)*{\hole}};
{\ar(6 , 2.5)*{};(12 , 5.5)*{\hole}}
\end{xy}
}
\def\address#1{\expandafter\def\expandafter\@aabuffer\expandafter
	{\@aabuffer{\affiliationfont{#1}}\relax\par
	\vspace*{13pt}}}
\begin{document}

\title{An elementary construction of Khovanov-Rozansky type link homology}

\author{KENJI ARAGANE}

\date{\empty}

\maketitle
%%%%%%%%%%%%%%%%%%%%%%%%%%%%%%%%%%%%%%%%%%%%%%%%%%%%%%%%%%%%%%%
%%%%%%%%%%%%%%%%%%%%%%%%%%%%%%%%%%%%%%%%%%%%%%%%%%%%%%%%%%%%%%%
%%%%%%%%%%%%%%%%%%%%%%%%%%%%%%%%%%%%%%%%%%%%%%%%%%%%%%%%%%%%%%%
%%%%%%%%%%%%%%%%%%%%%%%%%%%%%%%%%%%%%%%%%%%%%%%%%%%%%%%%%%%%%%%
%%%%%%%%%%%%%%%%%%%%%%%%%%%%%%%%%%%%%%%%%%%%%%%%%%%%%%%%%%%%%%%
%%%%%%%%%%%%%%%%%%%%%%%%%%%%%%%%%%%%%%%%%%%%%%%%%%%%%%%%%%%%%%%

\begin{abstract}
In this article, we give an elementary  construction of $sl(n)$-homological
 invariants of links presented by braid forms. The Euler characteristic of this complex is equal to $sl(n)$ quantum polynomial invariant of link. 

\end{abstract}
\section{Introduction}

We present here a new method of categorifing quantum polynomial invariants.

M. Khovanov and L. Rozansky defined  homological invariants of links whose graded Euler classes are quantum polynomial invariants.
They used the  notion of matrix factorization and resolutions of diagrams.
In this paper, we will define another homology theory which is similar to the Khovanov-Rozansky theory and related 
to the computational method of quantum polynomial invariant.

In Khovanov-Rozansky theory, graded vector spaces are
associated to oriented trivalent graphs.
We will construct a graded module whose generators are   sets of colorings
on graphs.
These colored graphs have representation theoretic meanings.
One can easily confirm that if we  take a field coefficient, our graded vector space is isomorphic to the one Khovanov and Rozansky defined using the matrix factorization.
So, our complex is isomorphic to the Khovanov-Rozansky complex as vector spaces. Similar relations have been already discussed and developed.   
Our theory can be defined for the integer coefficient though Khovanov-Rozansky theory are defined  for fields of characteristic zero. 
We first review quantum polynomial link invariants associated to the vector representation of $U_q (sl(n))$ and its graphical calculus.
Next, we introduce the graded module and construct  complexes. We prove the invariance of its homology under the Reidemeister moves.

Our method may suggest the importance of representation theory of quantum groups
 in categorical link theory 
and the possibility of  categorification of quantum polynomial invariants associated with  other representations of quantum groups.

\section{Review}
\subsection{Quantum invariants}
In this chapter we review the state sum construction of the quantum $sl(n)$ link invariant
associated with the vector representation.

\begin{definition}
Let $L$ be an oriented link in $S^3$ and $D$ be an oriented link diagram of $L$.
A resolution of the diagram $D$ means
a  locally oriented trivalent graph which is
obtained by resolutions
of all crossings of $D$ by either one of the two ways depicted in Fig.~\ref{01-res}. $\Box$
 \begin{figure}[htb]
\begin{center}
\includegraphics[scale=0.8]{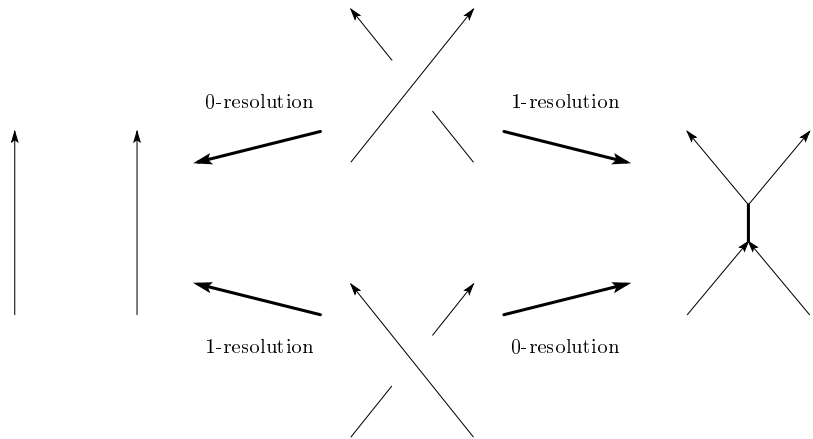}
\end{center}
\caption{0,\ 1-resolutions of crossings}
\label{01-res}
\end{figure}
\end{definition}

We call the wide edge obtained by the 1-resolution of positive crossing and the 0-resolution of negative crossing  the singular edge.
Edges of a resolution of the diagram $D$ except singular  edges  will be  called  the normal edges.
We call  the two edges coming to the singular edge the legs  and  two edges going out the singular edge the heads.

%%%%%%%%%%% %%%%%%%%%%%%%%%%%%%%%%%%%%%%%%%%%%%%%%%%%%%%%%%%%%%%%%%%%%%%%%%%%%%%%%%%%%%%%%%%%%%555

Fix a nonnegative integer $n$ and denote by  
$N$ the set of $n$ elements  $\{ 1,\cdots, n \}$.
For two different elements $a$ and $b$ of $N$ we define a number 
$\pi(a,\ b):=1$ if $a>b$, $\pi(a,\ b):=0$ if $a<b$.
Let $G$ be an oriented, trivalent, planar graph obtained by the resolution of $D$. A state $\sigma$ is an assignment 
of an element  of $N$ to each normal edge $e$.
It should satisfy the  conditions that for each singular edge the set of the elements of $N$ attached to the legs of the singular edge is equal to the set of the elements of $N$ attached to the heads of the singular edge and the elements attached to the heads (or legs) of singular edge  must be different from each other.

\begin{figure}[htb]
\begin{center}

\includegraphics[scale=0.6]{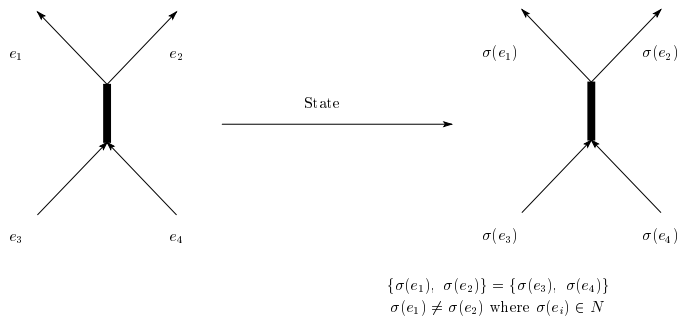}
\end{center}
\caption{Assign an element of $N$ to each normal edge}
\label{StateCondition}
\end{figure} 
%%%%%%%%%%%%%%%%%%%%%%%%%%%%%%%%%%%%%%%%%%%%%%%%%%%%%%%%%%%%%%%%%%5
Given a state $\sigma$ of $G$, we define the weight  $\mathrm{wt}(v,\ \sigma)$ of a vertex $v$ of a singular edge
to be  
\begin{equation}
\mathrm{wt}( v ,\ \sigma) = q^{1/2-\pi(\sigma(e_1),\ \sigma(e_2))},
\end{equation}
where $q$ is an indeterminate, and  $e_1$ and $e_2$ are the left and right legs (resp. heads) respectively with respect to the orientation of $G$.
\begin{figure}[htb]
\begin{center}
\includegraphics[scale=0.7]{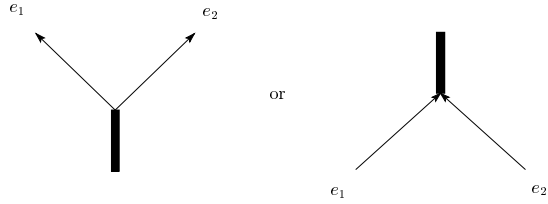}
\caption{Leg and Head of a vertex}
\label{LegHead}
\end{center}
\end{figure}

Let $E$ be a singular edge of $G$ and $v_1,\ v_2$ be  vertices of $E$ as in the figure \ref{ws}.
We define a weight of a singular edge as $wt(E):=wt(v_1)wt(v_2)$.
\begin{figure}[htb]
\begin{center}
\includegraphics[scale=0.7]{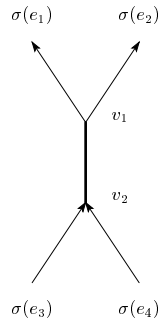}
\caption{Singular edge and vertices}
\label{ws}
\end{center}
\end{figure}
%%%%%%%%%%%%%%%%%%%%%%%%%%%%%%%%%%%%%%%%%%%%%%%%%%%%%%%%%%%%%%%%%%%%%%%%%%5
The singular edge can take three values according to the state around the singular edge. 
\begin{gather}
\mathrm{wt} (E)=
\begin{cases}
q & \sigma (e_1)= \sigma (e_3)<\sigma (e_2) = \sigma (e_4), \\
1 & \sigma(e_1) = \sigma (e_4),\  \sigma (e_2) = \sigma (e_3), \\
q^{-1} & \sigma (e_1)= \sigma (e_3)>\sigma (e_2) = \sigma (e_4).
\end{cases}
\end{gather}
%%%%%%%%%%%%%%%%%%%%%%%%%%%%%%%%%%%%%%%%%%%%%%%%%%%%%%%%%%%%%%%%%%

If we delete every singular edge of $G$ and identify the heads and the legs of the singular edges which were connected by  singular edges( in other words, collapse the singular edges to  points), we obtain an union of oriented closed curves each of which is equipped with the element of $N$.
\begin{figure}[htb]
\begin{center}
\includegraphics{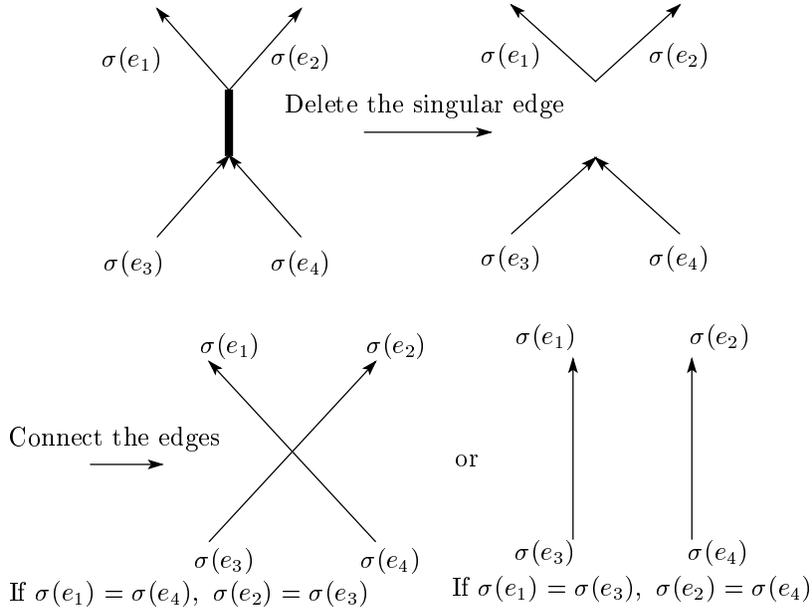}
\end{center}
\caption{Delete and connect the edge}
\label{DeleteAndConnect}
\end{figure}
Then we define the rotation number of the state to be 
$$
\mathrm{rot}(\sigma)= 
\sum_c{(2\sigma(C)-n-1)\mathrm{rot}(C)}
$$
where 
the sum is over all  closed curves  $C$ equipped with $\sigma(C)\in N$ and $\mathrm{rot}(C)$ 
is the rotation number of $C$. (It is $1$ if $C$ is counterclockwise and $-1$ otherwise.)
%%%%%%%%%%%%%%%%%%%%%%%%%%%%%%%%%%%%%%%%%%%%%%%%%%%%%%%%%%%%%%%%%%%%%%%%%%%%%%%%%%%%%%%%%%%%%%%%%%%%%%%%%%%%%%%%%%%%%%%%%%%%5555
Now we define a Laurent polynomial${\langle G \rangle}_n$ as  follows.v
$$
{\langle G \rangle}_n=\sum_{states\ \sigma}\{\prod_{vertices\ v}
\mathrm{wt}(v,\ \sigma)q^{\mathrm{rot}(\sigma)}\}
$$

%%%%%%%%%%%%%%%%%%%%%%%%%%%%%%%%%%%%%%%%%%%%%%%%%%%%%%%%%%%%%%%%%%%%%%%%%%%%%%%%%%%%%5
Denote by $Cr$ the set of crossings of $D$. 
Given a crossing of a diagram, we can resolve it in two possible ways.
A $resolution$ of $D$ is a resolution of each crossing of $D$. Thus, $D$ admits $2^{\# \{Cr\}}$ resolutions. There is a one-to-one correspondence between resolutions of $D$ and subsets of the set $Cr$ of crossings. Namely, to $cr \subset Cr$ we  associate a resolution of $D$, denoted $D(cr)$, by taking a 1-resolution of each crossing that belongs to $cr\subset Cr$ and a 0-resolution if the crossing does not lie in $cr$. 
Let $N_0 ^+,\ N_1^+$ be the number of 0-resolutions, 1-resolutions  of positive crossings respectively, and $N_0 ^-,\ N_1 ^-$ be the number of 0-resolutions, 1-resolutions  of negative crossings  respectively.
We denote the writhe number of $D$ as $wr(D)$.
Following these notations, we can state the theorem \cite{5}. These definitions are just based on the representation theory of quantum groups.
%%%%%%%%%%%%%%%%%%%%%%%%%%%%%%%%%%%%%%%%%%%%%%%%%%%%%%%%%%%%%%%%%%%%%%%%%%%%%%%%%%%%%%%%%
\begin{theorem}[Murakami, Ohtsuki, Yamada]
Let $L$ be an oriented link and $D$ be its diagram.
Associate a polynomial to $D$ as follows
$$
{\langle D \rangle}_n := q^{n(-wr(D)) }\sum_{cr\subset Cr} (-1)^{\#\{cr\}} { q^{N_0 ^+ -N_1^-}\langle D(cr) \rangle}_n,
$$

then ${\langle D \rangle}_n$ is invariant under the Reidemeister moves $I,\ II,\ and\ III$.
\end{theorem}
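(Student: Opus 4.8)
The plan is to first promote the state sum $\langle G \rangle_n$ to a genuine invariant of oriented trivalent graphs obeying a short list of local evaluation rules — the Murakami–Ohtsuki–Yamada graph relations — and only afterwards feed these into the three Reidemeister moves. The defining formula already packages each crossing as a signed, $q$-shifted combination of its $0$- and $1$-resolution (schematically $q\,\langle \text{$0$-res}\rangle - \langle \text{$1$-res}\rangle$ at a positive crossing and $\langle \text{$0$-res}\rangle - q^{-1}\langle \text{$1$-res}\rangle$ at a negative one, sharing the global prefactor), so proving that $\langle D \rangle_n$ is unchanged reduces to showing that, after expanding the crossings inside the relevant disk, the resulting alternating sums of graph polynomials agree before and after the move. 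The whole argument is therefore local.

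First I would establish the graph relations by direct state counting. Fix a disk in which two graphs differ, and partition the states of each by the labels they assign to the edges meeting the boundary of the disk; for a fixed boundary labelling the contributions factor, and one sums the internal weights $\mathrm{wt}(v,\sigma)$ and the rotation contribution $q^{\mathrm{rot}(\sigma)}$ over the finitely many internal completions. This yields: (i) a disjoint counterclockwise circle multiplies $\langle G\rangle_n$ by $\sum_{i=1}^n q^{2i-n-1}=[n]$, where $[n]=(q^n-q^{-n})/(q-q^{-1})$; (ii) a digon, i.e. a normal edge carrying a singular-edge bubble, collapses to the underlying normal edge times the factor $[n-1]$, since the auxiliary strand in the singular edge ranges over the $n-1$ labels distinct from the through-strand and its weighted sum is $[n-1]$; and (iii) the square relation expressing a graph with two adjacent singular edges as a combination of the two graphs in which the singular edges have been slid past one another. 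Rule (i) is the computation above; (ii) and (iii) require keeping track of the constraint that the two labels meeting a singular edge be distinct, which is exactly where the $\pi(a,b)$ weights do their work.

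With (i)–(iii) in hand the moves are handled in increasing difficulty. For R-I I would expand the single kink crossing into its two resolutions: one resolution returns the straight strand while splitting off a small loop evaluated by rule (i), and the other yields a digon evaluated by rule (ii); the resulting overall monomial in $q$ is precisely the one absorbed by the prefactor $q^{n(-wr(D))}q^{N_0^+-N_1^-}$, since inserting a positive (resp. negative) kink changes $wr(D)$ and the resolution counts $N_0^+, N_1^-$ in compensating ways. For R-II I would expand both crossings to obtain four resolved graphs; the signs and $q$-shifts arrange for the two mixed terms to cancel against the circle produced by the digon, leaving exactly the two parallel strands, i.e. the identity tangle.

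The hard part will be R-III. Expanding the three crossings yields eight resolved graphs on each side, and term-by-term matching is hopeless; instead I would invoke R-II invariance together with the square relation (iii) to perform a fork slide, rewriting the triple-point configuration on one side as that on the other through a sequence of local moves each of which preserves $\langle\,\cdot\,\rangle_n$. The main technical burden is checking that the square relation carries exactly the coefficients needed for this slide to close up, and that all orientation cases at the triple point are covered; once the square relation is normalized correctly, R-III follows formally from R-II and the graph relations, which is the standard MOY mechanism.
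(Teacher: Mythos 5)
You should know at the outset that the paper contains no proof of this theorem to compare against: it is stated purely as a review of the Murakami--Ohtsuki--Yamada result (the in-text citation even points at the wrong bibliography entry), justified only by the remark that the definitions come from the representation theory of quantum groups --- i.e., the route the paper gestures at is Reshetikhin--Turaev: the state sum is the weight-basis expansion of $R$-matrix intertwiners for the vector representation of $U_q(sl(n))$, and the Yang--Baxter equation plus the writhe correction give Reidemeister invariance. Your proposal instead reconstructs MOY's own combinatorial proof: promote the state sum to a graph invariant, verify local evaluation rules by summing internal states against fixed boundary colorings, then expand crossings and check each move inside a disk. That strategy is correct, and it buys something the representation-theoretic shortcut does not: the local relations you would prove are exactly the decompositions the paper later quotes, again without proof, as Propositions 3.3--3.5 and then uses for the Euler-characteristic claim in Theorem 3.7, so your route is the self-contained one. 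Two imprecisions to repair, though. First, your list of relations is one short: besides the circle ($[n]$), the curl on a strand ($[n-1]$, your rule (ii)), and the square relation, you need the digon bounded by two adjacent singular edges, which evaluates to $[2]=q+q^{-1}$ times a single singular edge; it is this relation, not (i) or (ii), that closes the parallel (same-orientation) R-II computation, where the four resolutions are wide, identity, double-wide, wide, and the double-wide term must be split as $[2]$ copies of the wide one against the two single-wide terms. Your R-II paragraph, which speaks of ``the circle produced by the digon,'' describes the antiparallel variant; both variants must be checked, since the theorem concerns arbitrary diagrams. Second, as you yourself acknowledge, the square relation is only named: in MOY's argument its verification --- where the $\pi(a,b)$-weights, the rotation numbers, and the distinct-color constraint at singular edges do all the work --- together with the orientation case analysis at the triple point is the bulk of the proof, and R-III does not ``follow formally'' until that is carried out.
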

%%%%%%%%%%%%%%%%%%%%%%%%%%%%%%%%%%%%%%%%%%%%%%%%%%%%%%%%%%%%%%%%%%%%%%%%%%%%%%%%%%%%%%%%%%%%%%%%%%%%%%%%%%%%%%%%%%%%%
\section{Homology}
\subsection{Cube structure}
We review the cube structure according to \cite{1}.
Let $A$ be a finite set. Denote by $r(A)$ the set of all pairs $(B ,\
 a)$ where $B$ is a subset of $A$ and $a$ an element of $A$ that does not
 belong to $B$. To simplify notation we often denote \\
(a) a one element set $\{a\}$ by $a$,\\
(b) a finite set  $\{a,\ b \cdots c\}$ by $ab\cdots c$,\\
(c) the disjoint union $A\sqcup B$ of two sets $A$, $B$ by $AB$; for example, we denote by $Aa$ the disjoint union of a set $A$ and a one
 element set $\{a\}$; similarly, $Aab$ means $A\sqcup \{a\} \sqcup \{b\}$, and so
 on.\\
\\
\begin{definition}Let $A$ be a finite set and $C$ an additive category. A
 commutative $A$-cube $V$ over $C$ is a collection of objects $V(X)\in \
 Ob(C)$ for each subset $X$of $A$ and morphisms for each $(X,\ a)\in
 r(A)$,\[
\xymatrix{
{\xi}_{a}^{V}(X): V(X) \ar[r] & V(Xa)\\ 
}\]
such that for each triple $(X,\ a,\ b)$ ,where $X$ is a subset of
 $A$ and $a,\ b,\ a\neq b$ are two elements of $A$ that do not lie in
 $X$, there is an equality of morphisms ${\xi}_{b}^{V}(Xa){\xi}_{a}^{V}(X)={\xi}_{a}^{V}(Xb){\xi}_{b}^{V}(X)$
\\
We say a commutative $A$- cube is an $A$-cube. Maps ${\xi}_{a}^{V}$ are
 called structure maps of $V$.
\end{definition}

\begin{definition}Let $A$ be a finite set and $C$ an additive category. A
 skew-commutative $A$-cube $V$ over $C$ is a collection of objects
 $V(X)\in Ob(C)$ for each subset $X$ of $A$, and morphisms
 \[
\xymatrix{
{\xi}_{a}^{V}(X): V(X) \ar[r] & V(Xa)\\ 
}\]
such that for each triple $(X,\ a,\ b)$, where $X$ is a subset of
 $A$ and $a,\ b,\ a\neq b$ are two elements of $A$ that do not lie in
 $X$, there is an equality of morphisms
${\xi}_{b}^{V}(Xa){\xi}_{a}^{V}(X)+{\xi}_{a}^{V}(Xb){\xi}_
{b}^{V}(X)=0$.
\end{definition}

Let $R$ be an arbitrary  commutative ring and we will  consider only cube structure over an additive category $R-mod$ unless stated otherwise.
Given $A$-cubes or skew $A$-cubes $V$ and $W$ over $R$-mod, their tensor
product is defined to be an $A$-cube(if $V$ and $W$ are both cubes or
skew cubes)or a skew cube(if one of $V$, $W$ is a cube and the other is a
skew cube), denoted $V\otimes W$, given by \\
\ $(V\otimes W)(X)=V(X)\otimes W(X)$\\
\ ${\xi}_{a}^{V\otimes W}(X)={\xi}_{a}^{V}(X) \otimes {\xi}_{a}^{W}(X)$\\
where tensor products are taken over $R$.

For a finite set $\L$, denote by $o(\L)$ the set of complete orderings 
  or elements of $\L.$ For $x,\ y\in o(\L)$ let $p(x,\ y)$ be the parity 
 function. $p(x,\ y)=0$ if $y$ can be obtained by  $x$ via  an even number 
  of transpositions of two neighboring elements in the ordering, 
 otherwise, $p(x,y)=1$. 
 To a finite set $\L$, associate a $R$-module $E(\L)$ defined 
  as the quotient of the  $R$-module, freely generated by elements 
  $x$ for all $x\in o(\L)$,  by relations $x = (-1)^{p(x,\ y)}y $ for 
  all pairs $x,y\in o(\L)$.  The module $E(\L)$ is a free  $R$-module 
  of rank $1.$ 
  For $a\not\in \L$ there is a canonical isomorphism of graded $R$-modules 
  $E(\L) \to E(\L   a)$ induced by the map  $o(L)\to o(L  a)$
  that takes $x\in o(L)$ to $xa\in o(L  a).$ Moreover, for 
  $a,\ b,\  a\not= b,$  the diagram below anticommutes 
 
\[
\xymatrix{
E(\L) \ar[r] \ar[d] &E(\L  a) \ar[d]\\
E(\L  b)  \ar[r] & E(\L  a  b)
}\]
  Denote by $E_{I}$ the skew $I$-cube with $E_{I}(\L)= E(\L)$ for 
  $\L \subset \ I$ and the structure map $E_{I}(\L)\to E_{I}(\L   a)$ 
  being canonical isomorphism $E(\L) \to E(\L   a).$ 

  We will use $E_{I}$ to pass 
  from $I$-cubes over $R$-mod to skew $I$-cubes 
  over $R$-mod by tensoring an $I$-cube with $E_{I}.$
 
 Let $V$ be a skew $I$-cube over an abelian category $C.$ 
To $V$ we associate a complex $C(V)=(C^i(V),d^i),\  i\in Z$ 
of objects of $C$ by 
\begin{equation} 
C^i(V)=\oplus_{\L\subset I, |\L|=i} V(\L)
\end{equation} 
The differential $d^i:C^i(V)\to C^{i+1}(V)$ is given on an element 
$x\in V(\L), |\L|=i$ by 
\begin{equation} 
d^i (x)=\sum_{a\in I\setminus \L} \xi^V_a(\L)x.
\end{equation} 
In practice we shall habitually drop $E(\L)$ from the notations of skew cube and cube complex which are derived  from commutative cube because they are only a choice of sign.

%%%%%%%%%%%%%%%%%%%%%%%%%%%%%%%%%%%%%%%%%%%%%%%%%%%%%%%%%%%%%%%%%%%%%%%%%%%%%%%%%%%%%%%%%%%%%%%%%%%%%%%%%%%%%%%%%%%%%%5
\subsection{Colored graph spaces  and constructions of structure morphisms}
We  give a definition of our chain complex of link diagram using
cube structure. 
From now on, we shall always assume that $L$ means a link and  $D$  mean a  link diagram of the closure of a clockwise  oriented  braid representing $L$.
By restricting our attention to braid closure diagram, we will prove the invariance under the Markov-moves instead of the Reidemeister-moves.
 %%%%%%%%%%%%%%%%%%%%%%%%%%%%%%%%%%%55
%%%%%%%%%%%%%%%%%%%%%%%%%%%%%%%%%%%%
%%%%%%%%%%%%%%%%%%%%%%%%%%%%%%%%%%%%%%%5

\begin{definition}
Let $G$ be an oriented trivalent graph
obtained by a resolution of a link diagram.
We define a graded 
$\Z$-module $C(G)$ as follows. As a generator of $C(G)$ we take  all the states of  graph $G$ and call them as colored graphs.
Colors of edges are represented by  integers from $1$ to $n$.
Fix a colored graph $\sigma\in C(G)$, we will  define  a grade of a state around a singular edge as follows.
Let $E$ be a singular edge of $G$ as in figure~\ref{ws}. 
\begin{gather}
\mathrm{deg} (\ E \ )=
\begin{cases}
1 & \sigma (e_1)= \sigma (e_3)<\sigma (e_2) = \sigma (e_4), \\
0 & \sigma(e_1) = \sigma (e_4),\  \sigma (e_2) = \sigma (e_3), \\
-1 & \sigma (e_1)= \sigma (e_3)>\sigma (e_2) = \sigma (e_4).
\end{cases}
\end{gather}
Similarly,  we  define a grade of parallel edges obtained by 0-resolution of positive crossing as $1$ and also define the degree of parallel  edges obtained by  1-resolution of negative crossing as $-1$.

We define a grading of a colored graph  by 
$$
\mathrm{gr} (\sigma) := {N_0 ^+ - N_1^- -wr(D)\cdot n } +\sum_{singular\ edges\ E} {\mathrm{deg} (E) } + \sum_{curves \ C} {(2\sigma(C)-n-1)} rot(C)
$$
where the sums are  over all singular edges in $G$ and curves as in the definition of the rotation number of the state, $N_0 ^+,\ N_1^-$ are the number of 0-resolutions of positive crossings, 1-resolutions of negative crossings respectively. 
We call this graded module $C(G)$
as colored graph space associated to $G$.
\end{definition}

We will depict the colored graph around the singular edge whose grading is $0$ as in the left hand side of the figure~\ref{degpn} and the colored graph whose grading is not $0$ as in the right hand side of figure~\ref{degpn} . In the figure~\ref{degpn}, we depict the singular edge whose grade is equal to $0$ as a simple crossing but  we will treat as an singular edge.  
\begin{figure}[htb]
\begin{center}
\includegraphics[scale=0.8]{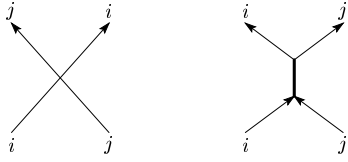}
\caption{States around the singular edge whose grade=0 or $\ne 0$}
\label{degpn}
\end{center}
\end{figure} 
{%%%%%%%%%%%%%%%%%%%%%%%%%%%%%%%%%%%%%%%%%%%%%%%%%%%%%%%%%%%%%%%%%%%%%%%%%%%%%%%%%%%%%%%%%%%%%%%%%%%%%%%%%%%%%%%%%%%%%5

\begin{figure}[htb]
\begin{center}
\includegraphics[scale=0.6]{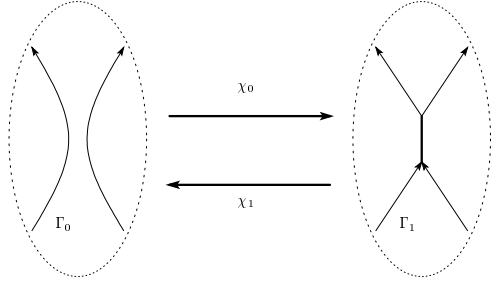}
\caption{Morphisms between
$\Gamma^0$ and $\Gamma^1$
}
\label{fig}
\end{center}

\end{figure}
%%%%%%%%%%%%%%%%%%%%%%%%%%%%%%%%%%%%%%%%%%%%%%%%%%%%%%%%%%%%%%%%%%%%%%%%%%%%%%%%%%%%%%%%%%%%%%%%%%%%%%%%%%%%%%%%%%%%%%%%%555
%%%%%%%%%%%%%%%%%%%%%%%%%%%%%%%%%%%%%%%%%%%%%%%%%%%%%%%%%%%%%%%%%%%%%%%%%%%%%%%%%%%%%%%%%%%%%%%%%%%%%%%%%%%%%%%%%%%%%%%%%%%%%
%%%%%%%%%%%%%%%%%%%%%%%%%%%%%%%%%%%%%%%%%%%%%%%%%%%%%%%%%%%%%%%%%%%%%%%%%%%%%%%%%%%%%%%%%%%%%%%%%%%%%%%%%%%%%%%%%%%%%%%%%%%%555
We will associate a $Cr$-cube to $D$ as follows. 
Given a crossing of a diagram $D$, we can resolve it in two possible ways as in Section2.
A $resolution$ of $D$ is a resolution of each crossing of $D$. Thus, $D$ admits $2^{\# \{Cr\}}-$resolutions. There is a one-to-one correspondence between resolutions of $D$ and subsets $cr$ of the set of crossings  $Cr$ . Namely, to $cr \subset Cr$ we  associate a resolution of $D$, denoted $D(cr)$, by taking a 1-resolution of each crossing that belongs to $cr$ and a 0-resolution if the crossing does not lie in $cr$ and assign the colored graph space $C(D(cr))$ to  $D(cr)$. 

Take a crossing $a$ of $D$ and a subset $A$ of $Cr$ which does not contain $a$.  Resolve crossings of $D$  by the  $1$-resolutions of the  crossings $A$ and the $0-$resolutions of the complement of $A$ except $a$. If we take a resolution of $a$ which replace the crossing $a$ to parallel normal edges, we will denote this resolution of $D$ as $\Gamma^0$. If we take a resolution of $a$ which replace the crossing $a$ to  a singular edge, we will denote this resolution of $D$ as $\Gamma^1$. The difference between $\Gamma^0$ and $\Gamma^1$ is depicted in the figure~\ref{fig}.
To define the cube structure,  we must define morphisms between them.
We define degree $0$ morphisms $\chi_0$ , $\chi_1$ 
between colored graph spaces as follows. Let $E$ be a singular edge of $\Gamma^1$ depicted  in the figure\ref{fig}. 
%%%%%%%%%%%%%%%%%%%%%%%%%%%%%%%%%%%%%%%%%%%%%%%%%%%%%%%%%%%%%%%%%%%%%%%%%%%%%%%%%%%%%%%%%%%%%%%%%%%%%%%%%%%%%%%%%%%%%%%%%%%%%%%%%%%%%%%%%
%%%%%%%%%%%%%%%%%%%%%%%%%%%%%%%%%%%%%%%%%%%%%%%%%%%%%%%%%%%%%%%%%%%%%%%%%%%%%%%%%%%%%%%%%%%%%%%%%%%%%%%%%%%%%%%%%%%%%%%%%%%%%%%%%%%%%%%%%
%%%%%%%%%%%%%%%%%%%%%%%%%%%%%%%%%%%%%%%%%%%%%%%%%%%%%%%%%%%%%%%%%%%%%%%%%%%%%%%%%%%%%%%%%%%%%%%%%%%%%%%%%%%%%%%%%%%%%%%%%%%%%%%%%%%%%%%%%
%%%%%%%%%%%%%%%%%%%%%%%%%%%%%%%%%%%%%%%%%%%%%%%%%%%%%%%%%%%%%%%%%%%%%%%%%%%%%%%%%%%%%%%%%%%%%%%%%%%%%%%%%%%%%%%%%%%%%%%%%%%%%%%%%%%%%%%%%
%%%%%%%%%%%%%%%%%%%%%%%%%%%%%%%%%%%%%%%%%%%%%%%%%%%%%%%%%%%%%%%%%%%%%%%%%%%%%%%%%%%%%%%%%%%%%%%%%%%%%%%%%%%%%%%%%%%%%%%%%%%%%%%%%%%%%%%%%
%%%%%%%%%%%%%%%%%%%%%%%%%%%%%%%%%%%%%%%%%%%%%%%%%%%%%%%%%%%%%%%%%%%%%%%%%%%%%%%%%%%%%%%%%%%%%%%%%%%%%%%%%%%%%%%%%%%%%%%%%%%%%%%%
We will call  colorings of singular edges as positive type (resp. negative)  if  the degree of the singular edge is equal to $1$  (resp. $-1$).
We define the morphisms $\chi_0$, $\chi_1$ as follows.

First we will define  $\chi_1$ morphism. $\chi_1$ morphisms are morphisms from $0-$resolution  to $1-$resolution of a negative crossing.  Take a colored graph $\sigma \in C(\Gamma^1)$ and depict $\sigma$ as a colored oriented trivalent planar graph.
If we delete  all the  singular edges of $\sigma$ and identify the vertices of heads and legs of same singular edge  as in the Fig.~\ref{DelGlue} , 
\begin{figure}[htb]
\begin{center}
\includegraphics[scale=0.8]{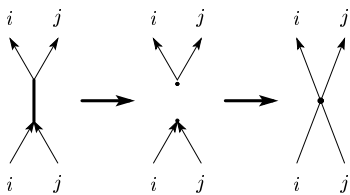}
\end{center}
\caption{Delete the singular edge and identify the vertices }
\label{DelGlue}
\end{figure}
   we will obtain a new colored oriented  graph. We will  call this new graph  as colored circles of $\sigma$ and consider them as  simple closed oriented curves with colorings.  We can also regard the colored circles  as the union of oriented colored normal edges of $\sigma$ because we  deleted only the singular edges of $\sigma$.  We will say that a colored circle $C$ goes through a singular edge $S$ ( or a singular edge $S$ is on a  colored circle $C$),  if a normal edge of the colored circle is the head or the leg of the singular edge $S$.

%%%%%%%%%%%%%%%%%%%%%%%%%%%%%%%%%%%%%%%%%%%%%%%%%%%%%%%%%%%%%%%%55
Take a colored circle $C$ and  singular edges $S$, $S'$ on $C$. We can take an oriented path  on $C$ which is outgoing from $S$ and incoming to $S'$. We will call this subgraph of $C$ as colored path from $S$ to $S'$ and call  these singular edges  a source singular edge $S$ and  a target singular edge $S'$.
%%%%%%%%%%%%%%%%%%%%%%%%%%%%%%%%%%%%%%%%%%%%%%%%%%%%%%%%55555
We can regard a colored path on $C$ as a subset  of  normal edges of $C$. (We do not assume  $S\ne S'$. If $S=S'$, the colored path from $S$ to $S'$ is equal to  the   colored circle itself. )

If there exist two distinct colored paths which start from the same singular edge and end at the same singular edge, they consist a circle and can be seen as an oriented  colored planar subgraph. If this subgraph does not have self-intersections and does not have  a path which crosses  this subgraph  whose intersection points with this subgraph are positive singular edges or negative  singular edges, we call them as distinguished circle.  
By definition, it is trivial that the normal edges of distinguished circle have just  $2$ distinct colorings. Exchanging the colorings of colored paths of distinguished circle will introduce a new distinguished circle and  a new  colored oriented graph. We will call this coloring exchanging procedure as exchanging   
 colorings  of distinguished circle (or short, color exchange ). 

Suppose the singular edge $E$ of $\Gamma^1$ in the Fig.~\ref{fig} has degree$=1$ and there exist   distinguished circles which contain the leg or the head of $E$ and its source and target singular edges are not positive types. 
If   color exchanging of the distinguished circle  shifts the degree of the source and the target vertices $+1$ and  does not change  the  degrees of singular edges on the distinguished circle   except $3$ singular edges  "the source singular edge"  "the target singular edge" and $E$, we will exchange the colorings of colored paths of  the distinguished circle then replace the singular edge $E$ to the parallel normal edges as in the Fig.~\ref{replace}. We can   consider it as an element of $C(\Gamma^0)$.

\begin{figure}[htb]
\begin{center}
\includegraphics[scale=0.6]{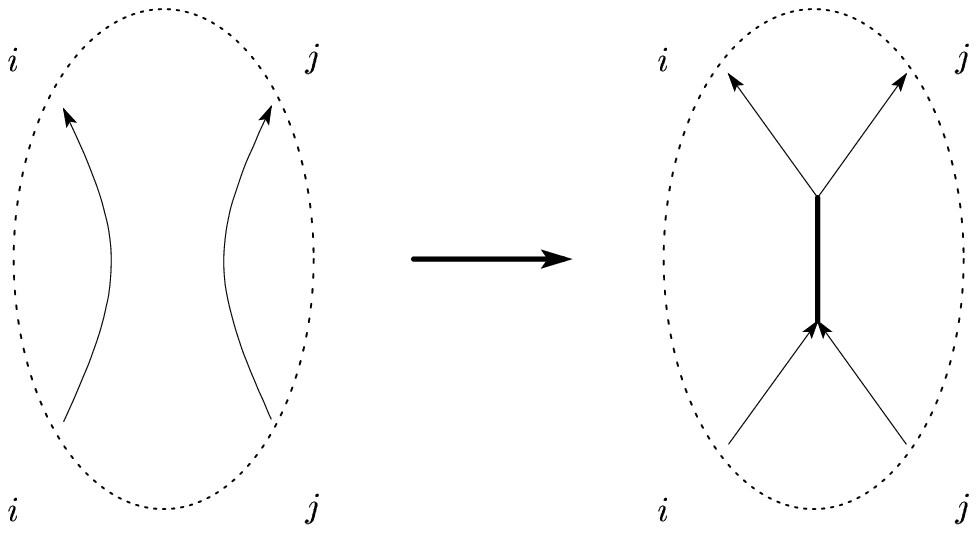}
\end{center}
\caption{Replace}
\label{Rep}
\end{figure}

We  will modify and add other term as the special case. We must add and consider exceptional terms to $\chi_1$  which is related to the Reidemeister1-move. 
If the normal edge which is the head or leg of $E$ does not intersect with 
 other normal edges (i.e., the head and the leg of singular edge $E$ consist a simple loop),  delete the singular edge and  replace to parallel normal  edges  and change the coloring of simple closed curve as to preserve the grade as in the Fig.~\ref{exceptional}.

\begin{figure}[htb]
\begin{center}
\includegraphics[scale=0.6]{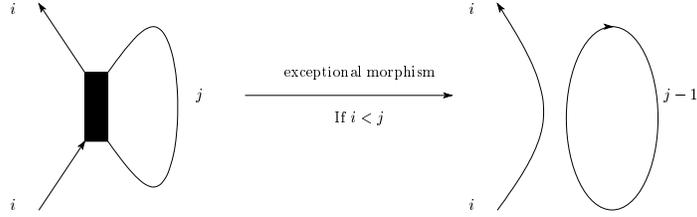}
\end{center}
\caption{Exceptional term}
\label{exceptional}
\end{figure}

If the subgraph does not permit these procedure, we send them to $0$. If there are some possibilities of taking distinguished circles and exchanging the colorings, sum up all of them. If there are no possibilities of taking distinguished circles and exchanging the colorings which satisfies the conditions, we will send  to $0$.

In the case when the singular edge $E$ has  degree$=-1$,  replace the singular edge $E$ to  parallel normal edges  and we can naturally consider colorings of them as in the Fig.~\ref{replace}. We will  consider them as an element of $C(\Gamma^0)$.  We will denote it as $\tilde {\sigma}$ and consider as an element of  $C(\Gamma^0)$. The relation between $\sigma$ and $ \tilde {\sigma }$ is   $\mathrm{gr} (\sigma)= \mathrm{gr} (\tilde {\sigma } ) $.

\begin{figure} [htb]
\begin{center}
\includegraphics[scale=0.8]{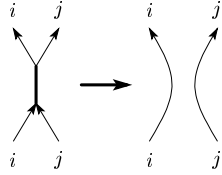}
\end{center}
\caption{Replace the singular edge $E$}
\label{replace}
\end{figure}

If the singular edge $E$ has degree $=0$, take two colored  paths outgoing from $E$ and incoming to the same singular edge ( or  incoming to $E$ and outgoing from the same singular edge).  If  the degrees of the singular edges  on the colored paths  except the source and target vertices are not changed  by exchanging  the colorings of these colored paths, exchange the colorings of the colored paths. These conditions implies  that the source or the target singular edge which is not $E$ must not be positive type and the degree of its singular edge must be shifted $+1$, and  the degree of the singular edge $E$ is  changed to $- 1$. So we can replace the singular edge $E$ to the parallel normal edges and consider it as an element of $C(\Gamma^0)$.  We denote it as $\sigma' \in C(\Gamma^0) $. We only permit  color exchanging which satisfies  $\mathrm{gr} (\sigma)= \mathrm{gr} (\sigma ')$. 
 If there are some possibilities of exchanging the colorings, sum up all of them. If there are no possibilities of changing which satisfy the conditions, we consider the value as $0$.\\

\begin{figure}[htb]
\begin{center}
\includegraphics[scale=0.6]{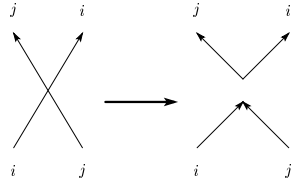}
\end{center}
\caption{Delete the singular edge whose grade=0}
\label{DelS}
\end{figure}

%%%%%%%%%%%%%%%%%%%%%%%%%%%%%%%%%%%%%%%%%%%%%%%%%%%%%%%%%%%%%%%%%%%%%%%%%%%%%%%%%%%%%%%%%%%%%%%%%%%%%%%%%%%%%%%%%%%%%%%%%%%%%%%%%%%%%%%%%
%%%%%%%%%%%%%%%%%%%%%%%%%%%%%%%%%%%%%%%%%%%%%%%%%%%%%%%%%%%%%%%%%%%%%%%%%%%%%%%%%%%%%%%%%%%%%%%%%%%%%%%%%%%%%%%%%%%%%%%%%%%%%%%%%%%%%%%%%
%%%%%%%%%%%%%%%%%%%%%%%%%%%%%%%%%%%%%%%%%%%%%%%%%%%%%%%%%%%%%%%%%%%%%%%%%%%%%%%%%%%%%%%%%%%%%%%%%%%%%%%%%%%%%%%%%%%%%%%%%%%%%%%%%%%%%%%%%
%%%%%%%%%%%%%%%%%%%%%%%%%%%%%%%%%%%%%%%%%%%%%%%%%%%%%%%%%%%%%%%%%%%%%%%%%%%%%%%%%%%%%%%%%%%%%%%%%%%%%%%%%%%%%%%%%%%%%%%%%%%%%%%%%%%%%%%%%
%%%%%%%%%%%%%%%%%%%%%%%%%%%%%%%%%%%%%%%%%%%%%%%%%%%%%%%%%%%%%%%%%%%%%%%%%%%%%%%%%%%%%%%%%%%%%%%%%%%%%%%%%%%%%%%%%%%%%%%%%%%%%%%%%%%%%%%%%
%%%%%%%%%%%%%%%%%%%%%%%%%%%%%%%%%%%%%%%%%%%%%%%%%%%%%%%%%%%%%%%%%%%%%%%%%%%%%%%%%%%%%%%%%%%%%%%%%%%%%%%%%%%%%%%%%%%%%%%%%%%%%%%%%%%%%%%%%
%%%%%%%%%%%%%%%%%%%%%%%%%%%%%%%%%%%%%%%%%%%%%%%%%%%%%%%%%%%%%%%%%%%%%%%%%%%%%%%%%%%%%%%%%%%%%%%%%%%%%%%%%%%%%%%%%%%%%%%%%%%%%%%%%%%%%%%%%
%%%%%%%%%%%%%%%%%%%%%%%%%%%%%%%%%%%%%%%%%%%%%%%%%%%%%%%%%%%%%%%%%%%%%%%%%%%%%%%%%%%%%%%%%%%%%%%%%%%%%%%%%%%%%%%%%%%%%%%%%%%%%%%%%%%%%%%%%
%%%%%%%%%%%%%%%%%%%%%%%%%%%%%%%%%%%%%%%%%%%%%%%%%%%%%%%%%%%%%%%%%%%%%%%%%%%%%%%%%%%%%%%%%%%%%%%%%%%%%%%%%%%%%%%%%%%%%%%%%%%%%%%%%%%%%%%%%
\begin{figure}[htb]
\begin{center}
\includegraphics[scale=0.6]{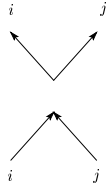}
\end{center}
\caption{Changing must be these type}
\label{AroundSingular}
\end{figure}

\begin{figure}[htb]
\begin{center}
\includegraphics[scale=0.6]{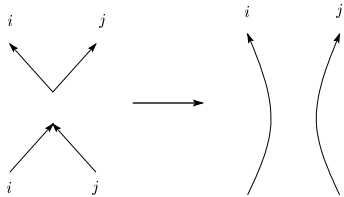}
\end{center}
\caption{Parallelize the edges which were connected to $E$}
\label{Redraw}
\end{figure}

%%%%%%%%%%%%%%%%%%%%%%%%%%%%%%%%%%%%%%%%%%%%%%%%%%%%%%%%%%%%%%%%%%%%%%%%%%%%%%%%%%%%%%%%%%%%%%%%%%%%%%%%%%%%%%%%%%%%%%%%%%%%%%%%%%%%%%%%%
%%%%%%%%%%%%%%%%%%%%%%%%%%%%%%%%%%%%%%%%%%%%%%%%%%%%%%%%%%%%%%%%%%%%%%%%%%%%%%%%%%%%%%%%%%%%%%%%%%%%%%%%%%%%%%%%%%%%%%%%%%%%%%%%%%%%%%%%%
%%%%%%%%%%%%%%%%%%%%%%%%%%%%%%%%%%%%%%%%%%%%%%%%%%%%%%%%%%%%%%%%%%%%%%%%%%%%%%%%%%%%%%%%%%%%%%%%%%%%%%%%%%%%%%%%%%%%%%%%%%%%%%%%%%%%%%%%%
%%%%%%%%%%%%%%%%%%%%%%%%%%%%%%%%%%%%%%%%%%%%%%%%%%%%%%%%%%%%%%%%%%%%%%%%%%%%%%%%%%%%%%%%%%%%%%%%%%%%%%%%%%%%%%%%%%%%%%%%%%%%%%%%%%%%%%%%%
%%%%%%%%%%%%%%%%%%%%%%%%%%%%%%%%%%%%%%%%%%%%%%%%%%%%%%%%%%%%%%%%%%%%%%%%%%%%%%%%%%%%%%%%%%%%%%%%%%%%%%%%%%%%%%%%%%%%%%%%%%%%%%%%%%%%%%%%%
%%%%%%%%%%%%%%%%%%%%%%%%%%%%%%%%%%%%%%%%%%%%%%%%%%%%%%%%%%%%%%%%%%%%%%%%%%%%%%%%%%%%%%%%%%%%%%%%%%%%%%%%%%%%%%%%%%%%%%%%%%%%%%%%%%%%%%%%%
%%%%%%%%%%%%%%%%%%%%%%%%%%%%%%%%%%%%%%%%%%%%%%%%%%%%%%%%%%%%%%%%%%%%%%%%%%%%%%%%%%%%%%%%%%%%%%%%%%%%%%%%%%%%%%%%%%%%%%%%%%%%%%%%%%%%%%%%%
%%%%%%%%%%%%%%%%%%%%%%%%%%%%%%%%%%%%%%%%%%%%%%%%%%%%%%%%%%%%%%%%%%%%%%%%%%%%%%%%%%%%%%%%%%%%%%%%%%%%%%%%%%%%%%%%%%%%%%%%%%%%%%%%%%%%%%%%%

In the $\chi_0$ case, we  would like to construct a  morphism $C(\Gamma^0) \longrightarrow   C(\Gamma^1)$ which is similar to $\chi _1$. Fix an element $\sigma \in C(\Gamma^0)$ and consider $\sigma$ as colored  oriented trivalent planar graph.  

Assume that the parallel normal edges of $\sigma$ have colorings  $i\geq j$    depicted as in the Fig.~\ref{chi0} and put a dotted circle around the parallel normal edges. This putted circle can be seen as a dotted line as in the left-hand side of the figure~\{fig}. We may treat a dotted circle like a singular edge.  
Take two colored paths which start from same singular edge (or the  dotted circle around the parallel edges) and end at same singular edge (or the  dotted circle around the  parallel normal edges). We assume that one of the colored paths contains the normal edge of parallel edges  colored $i,\ or\ j$.    These  two path can be seen  as a colored subgraph of $\sigma$ and we impose them  not to have self-intersections and  not to have a path which crosses  this subgraph whose intersection points with this subgraph are positive singular edges or negative singular edges. 
So  paths which traverse  the subgraph are allowed to have only degree zero intersection points. We also call  them as distinguished subgraph. (By considering the dotted circle as singular edge, a distinguished subgraph can be seen as a distinguished circle which is used in the $\chi_1$ case.)
Exchanging  colorings of  colored paths of a distinguished subgraph will introduce a new distinguished subgraph and  a new  colored oriented graph.

Take a distinguished circle of $\sigma$  whose source and terminal singular edges are not positive.
If exchanging  colorings  of the distinguished subgraph  does not change the  degrees of the singular edges on the distinguished subgraph   except  the source and target singular edges (or the dotted circle as source or target singular edge of distinguished subgraph), we will exchange the colorings of colored paths of  the distinguished subgraph and replace the parallel normal edges  to the singular  normal edge $E$ by the natural way as in the Fig.~\ref{replace}. We can   consider it as an element of $C(\Gamma^1)$. 
We  will modify and add other term as the special case. We must add and consider exceptional terms to $\chi_0$  which is related to the Reidemeister 1-move. 
If the normal edges of parallel edges colored $i$ or $j$ consist  a simple loop,   replace the parallel normal edges to singular edges  and change the coloring of simple closed curve as to preserve the grade as in the $\chi_1$ case.

If the subgraph does not permit these procedure (or there are no such subgraph satisfying the conditions), we send them to $0$. If there are some possibilities of taking distinguished subgraphs and exchanging the colorings, sum up all of them. 
\begin{figure}[htb]
\begin{center}
\includegraphics[scale=0.6]{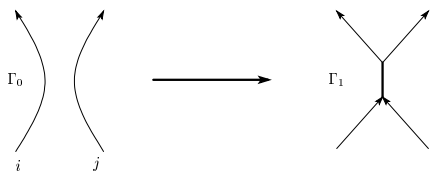}
\end{center}
\caption{$\chi_0$ morphism}
\label{chi0}
\end{figure}

\begin{figure}[htb]
\begin{center}
\includegraphics[scale=0.6]{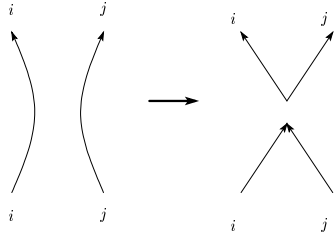}
\end{center}
\caption{Replace the parallel edges obtained by the 0-resolution}
\label{ReplacePara}
\end{figure}
Suppose the parallel edges of $\sigma$ colored $i,\ j$  depicted in the Fig.~\ref{chi0} has a relation $i<j$,  replace the parallel normal  edges to a singular edge $E$ and naturally extend its colorings. This singular edge $E$ has the degree $1$ and  we will consider them as an element of $C(\Gamma ^1)$.

So, $\chi_0$ and $\chi_1$ change some gradings of singular edges and colorings of normal edges. They are  degree$=0$ morphisms by  definitions.

\begin{proposition}
  The colored modules and the modules $( C(D(cr)),\ \chi_0 ,\  \chi_1)$  defined above  admit the cube structure.
\end{proposition}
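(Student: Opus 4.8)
The plan is to verify the two requirements of a commutative $Cr$-cube directly: that the assignment $X \mapsto C(D(X))$ together with the morphisms just constructed gives well-defined degree-$0$ structure maps $\xi_a^V(X)\colon C(D(X)) \to C(D(Xa))$ for each $(X,a)\in r(Cr)$, and that these satisfy the commutativity relation $\xi_b^V(Xa)\,\xi_a^V(X) = \xi_a^V(Xb)\,\xi_b^V(X)$ for every triple $(X,a,b)$. Here the structure map $\xi_a^V(X)$ is the morphism $\chi_0$ when $a$ is a positive crossing (so that the $0$-resolution $\Gamma^0$ carries parallel normal edges and the $1$-resolution $\Gamma^1$ carries a singular edge) and is $\chi_1$ when $a$ is a negative crossing (so that $\Gamma^0$ carries a singular edge and $\Gamma^1$ carries parallel edges); in both cases the arrow runs from the $0$- to the $1$-resolution of $a$, which is exactly the direction $X \to Xa$ for a cube indexed by subsets of $Cr$. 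That each of $\chi_0,\chi_1$ is grading-preserving was already arranged in the construction, so the objects and arrows are in place and only the square relation remains. Once commutativity is established, tensoring the resulting commutative cube with the skew cube $E_{Cr}$ (as in the preceding subsection) converts it to a skew cube, and hence yields a genuine complex with $d^{i+1}d^i=0$.

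First I would reduce the verification to generators. Each $C(D(X))$ is the free $\Z$-module on the states of $D(X)$, and both $\chi_0$ and $\chi_1$ act on a state $\sigma$ as a $\Z$-linear combination of states, each obtained by choosing an admissible distinguished circle (or distinguished subgraph), performing the color exchange along its two colored paths, and replacing parallel edges by a singular edge or conversely at the crossing in question. Consequently it suffices to fix a single state $\sigma \in C(D(X))$, expand both composites $\xi_b^V(Xa)\,\xi_a^V(X)(\sigma)$ and $\xi_a^V(Xb)\,\xi_b^V(X)(\sigma)$ into $\Z$-linear combinations of states of $D(Xab)$, and exhibit a coefficient-preserving bijection between the two resulting families of terms. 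The organizing principle is that the surgery attached to $a$ is governed by distinguished circles through the edge that replaces $a$, and likewise for $b$, so the task becomes an analysis of how these two pieces of distinguished-circle data interact.

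The core is a case analysis according to the signs of $a$ and $b$, which dictates whether each structure map is $\chi_0$ or $\chi_1$. In every case the central claim is that the two local surgeries commute: the admissible distinguished circles through the edge at $a$ and those through the edge at $b$ can be matched so that the ordered pair of color-exchange-and-replace operations produces the same state regardless of order. When the relevant distinguished circles meet the two crossings in disjoint parts of the graph, the two operations act on disjoint colored paths and commute at once. When a single distinguished circle passes through both $a$ and $b$, I would track the colorings along the shared colored paths and check that the grading bookkeeping, namely the degree $\ddeg(E)$ of each singular edge together with the rotation-number contribution, is symmetric in $a$ and $b$; this is what guarantees that the admissibility conditions ``source and target singular edges are not of positive type'' and ``only the source, the target and the crossing edge change degree'' survive the interchange of the two surgeries.

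The main obstacle I anticipate is precisely the global character of the color exchange: a distinguished circle may wind through the whole diagram and involve both crossings at once, so admissibility of an exchange at $b$ is not \emph{a priori} independent of having first performed the exchange at $a$. The decisive step will be a stability statement: the set of distinguished circles through the edge at $b$ satisfying the grade-preservation and non-positive-source/target conditions is unchanged after the surgery at $a$ is carried out, and symmetrically. Granting this stability, the bijection of terms is forced, the coefficients (each $\pm 1$) agree, and the square closes; the residual signs introduced by the canonical isomorphisms of the skew cube $E_{Cr}$ anticommute by construction, which is exactly what converts the commutative cube into the skew cube underlying the differential.
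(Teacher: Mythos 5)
Your proposal is correct and takes essentially the same approach as the paper's own proof: reduce to commutativity of the structure-morphism squares on generators, case-split on the signs of the two crossings, dispose of the case of disjoint distinguished circles immediately, and handle interacting circles by arguing that the admissible color exchanges are order-independent. The ``stability statement'' you single out as the decisive step is exactly what the paper asserts---without further justification---when it claims that the exchanging procedures do not depend on the order, so your sketch matches the paper and is, if anything, more explicit about where the real work lies.
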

\begin{proof}
To prove they have the cube structure, it is sufficient to confirm that  the structure morphisms commute.
Let $cr$  be a subset of $Cr$, and take two distinct  elements $a\ne b\in (Cr\setminus  cr)$. We denote the structure morphisms as  follows.
\[\xymatrix{
C(D(cr)) \ar[d]_{\chi_b} \ar[r]^{\chi_a} & C(D(cr\sqcup a))  \ar[d]^{\chi_{ab}} \\
C(D(cr\sqcup b)) \ar[r]^{\chi_{ba}}  & C(D(cr\sqcup a \sqcup b))\\
}
\]
Let  $a$  and $ b$ be negative crossings and  $E_a$ and $ E_b$ be the singular edges appearing in the $0-$resolutions of $a$ and $ b$. Then the structure morphisms $\chi_a,\ \chi_b,\ \chi_{ab}$ and $\chi_{ba}$ are all $\chi_1$ type morphisms.
If the distinguished circles of $E_a$ and $ E_b$ do not intersect  each other, $\chi_a$ and $ \chi_b$ do not affect to each other  because the $\chi_1$-type structure morphism changes only the local colorings of edges of distinguished colored circles or changes the colorings of   simple loops. If the singular edges $E_a$ and $ E_b$ have degree$=-1$, the morphisms $\chi_a$ and $ \chi_b$ are trivial.  Thus  in these cases, $\chi_{ab}\circ \chi_a = \chi_{ba}\circ \chi_{b}$. So, we assume that the distinguished circles have intersections.
 
By the definitions and the assumptions of  the distinguished circle and the structure morphism, the distinguished circles can have common singular edges.
But it is assumed that the structure morphism  does not change the degree of singular edges except  source and target singular edges and preserve the degree. If  the source or the target singular edge of the distinguished circle of $a$ is coincide to the singular edge $E_b$, they have common distinguished circle and the structure morphisms also commute.   Therefore exchanging procedures do not depend on the order.   Thus, they commute to each other. So we can conclude that the  compositions are commutative for $\chi_1$-type morphisms.
Suppose $a,\ b$ be positive crossings and $E_a,\ E_b$ be the singular edges appearing in the $1-$resolutions of $a,\ b$. In this case, we also exchange colorings of distinguished subgraph but this procedure is also commutative by the definitions and the assumptions of the distinguished subgraph and structure morphism. 
So, in this case, It is similarly confirmed that the structure morphisms commutes. 
The other cases can be checked by similar way.
So, we can conclude that the structure morphisms  commute to each other.
%%%%%%%%%%%%%%%%%%%%%%%%%%%%%%%%%%%%%%%%%%%%%%%%%%%%%%%%%%%%%%%%%%%%%%%%%%%%%%%%%%%%%%%%%%%%%%%%%%%%%%%%%%%%%%%%%%%%%%%%%%%%%%%%%%%%%%%%%
%%%%%%%%%%%%%%%%%%%%%%%%%%%%%%%%%%%%%%%%%%%%%%%%%%%%%%%%%%%%%%%%%%%%%%%%%%%%%%%%%%%%%%%%%%%%%%%%%%%%%%%%%%%%%%%%%%%%%%%%%%%%%%%%%%%%%%%%%
%%%%%%%%%%%%%%%%%%%%%%%%%%%%%%%%%%%%%%%%%%%%%%%%%%%%%%%%%%%%%%%%%%%%%%%%%%%%%%%%%%%%%%%%%%%%%%%%%%%%%%%%%%%%%%%%%%%%%%%%%%%%%%%%%%%%%%%%%
%%%%%%%%%%%%%%%%%%%%%%%%%%%%%%%%%%%%%%%%%%%%%%%%%%%%%%%%%%%%%%%%%%%%%%%%%%%%%%%%%%%%%%%%%%%%%%%%%%%%%%%%%%%%%%%%%%%%%%%%%%%%%%%%%%%%%%%%%
%%%%%%%%%%%%%%%%%%%%%%%%%%%%%%%%%%%%%%%%%%%%%%%%%%%%%%%%%%%%%%%%%%%%%%%%%%%%%%%%%%%%%%%%%%%%%%%%%%%%%%%%%%%%%%%%%%%%%%%%%%%%%%%%%%%%%%%%%
%%%%%%%%%%%%%%%%%%%%%%%%%%%%%%%%%%%%%%%%%%%%%%%%%%%%%%%%%%%%%%%%%%%%%%%%%%%%%%%%%%%%%%%%%%%%%%%%%%%%%%%%%%%%%%%%%%%%%%%%%%%%%%%%%%%%%%%%%
\end{proof}
%%%%%%%%%%%%%%%%%%%%%%%%%%%%%%%%%%%%%%%%%%%%%%%%%%%%%%%%%%%%%%%%%%%%%%%%%%%%%%%%%%%%%%%%%%%%%%%%%%%%%%%%%%%%%%%%%%%%%%%%%%%%%%%%%%%%%%%%%
%%%%%%%%%%%%%%%%%%%%%%%%%%%%%%%%%%%%%%%%%%%%%%%%%%%%%%%%%%%%%%%%%%%%%%%%%%%%%%%%%%%%%%%%%%%%%%%%%%%%%%%%%%%%%%%%%%%%%%%%%%%%%%%%%%%%%%%%%
%%%%%%%%%%%%%%%%%%%%%%%%%%%%%%%%%%%%%%%%%%%%%%%%%%%%%%%%%%%%%%%%%%%%%%%%%%%%%%%%%%%%%%%%%%%%%%%%%%%%%%%%%%%%%%%%%%%%%%%%%%%%%%%%%%%%%%%%%
%%%%%%%%%%%%%%%%%%%%%%%%%%%%%%%%%%%%%%%%%%%%%%%%%%%%%%%%%%%%%%%%%%%%%%%%%%%%%%%%%%%%%%%%%%%%%%%%%%%%%%%%%%%%%%%%%%%%%%%%%%%%%%%%%%%%%%%%%
%%%%%%%%%%%%%%%%%%%%%%%%%%%%%%%%%%%%%%%%%%%%%%%%%%%%%%%%%%%%%%%%%%%%%%%%%%%%%%%%%%%%%%%%%%%%%%%%%%%%%%%%%%%%%%%%%%%%%%%%%%%%%%%%%%%%%%%%%
%%%%%%%%%%%%%%%%%%%%%%%%%%%%%%%%%%%%%%%%%%%%%%%%%%%%%%%%%%%%%%%%%%%%%%%%%%%%%%%%%%%%%%%%%%%%%%%%%%%%%%%%%%%%%%%%%%%%%%%%%%%%%%%%%%%%%%%%%
%%%%%%%%%%%%%%%%%%%%%%%%%%%%%%%%%%%%%%%%%%%%%%%%%%%%%%%%%%%%%%%%%%%%%%%%%%%%%%%%%%%%%%%%%%%%%%%%%%%%%%%%%%%%%%%%%%%%%%%%%%%%%%%%%%%%%%%%%
%%%%%%%%%%%%%%%%%%%%%%%%%%%%%%%%%%%%%%%%%%%%%%%%%%%%%%%%%%%%%%%%%%%%%%%%%%%%%%%%%%%%%%%%%%%%%%%%%%%%%%%%%%%%%%%%%%%%%%%%%%%%%%%%%%%%%%%%%
%%%%%%%%%%%%%%%%%%%%%%%%%%%%%%%%%%%%%%%%%%%%%%%%%%%%%%%%%%%%%%%%%%%%%%%%%%%%%%%%%%%%%%%%%%%%%%%%%%%%%%%%%%%%%%%%%%%%%%%%%%%%%%%%%%%%%%%%%
%%%%%%%%%%%%%%%%%%%%%%%%%%%%%%%%%%%%%%%%%%%%%%%%%%%%%%%%%%%%%%%%%%%%%%%%%%%%%%%%%%%%%%%%%%%%%%%%%%%%%%%%%%%%%%%%%%%%%%%%%%%%%%%%%%%%%%%%%
%%%%%%%%%%%%%%%%%%%%%%%%%%%%%%%%%%%%%%%%%%%%%%%%%%%%%%%%%%%%%%%%%%%%%%%%%%%%%%%%%%%%%%%%%%%%%%%%%%%%%%%%%%%%%%%%%%%%%%%%%%%%%%%%%%%%%%%%%
Let $k$ be an integer. We denote by $\{ k \} $ the grading shift up by $k$. 
(i,e. Let $\sigma \in C(G)$ be a colored graph and $\mathrm{gr} (\sigma)=n$, then $\sigma\{ k \} \in C(G) \{ k \}$ has the grading  $\mathrm{gr} (\sigma)=n+k$.)
We denote the some propositions which are called MOY relations developed by Murakami, Ohtsuki and  Yamada \cite{5}.

%%%%%%%%%%%%%%%%%%%%%%%%%%%%%%%%%%%%%%%%%%%%%%%%%%%%%%%%%%%%%%%%%%%%%%%%%%%%%%%%%%%%%%%%%%%%%%
\begin{proposition}
Let $\Gamma$ and $\Gamma_1$ be the graphs depicted in the figure~\ref{D1}. There is an isomorphisms as graded modules $C(\Gamma) \cong \oplus_{i=0} ^{n-2} C(\Gamma_1) \{  2-n+2i\}$.
\begin{figure}[htb]
\begin{center}
\includegraphics[scale=0.7]{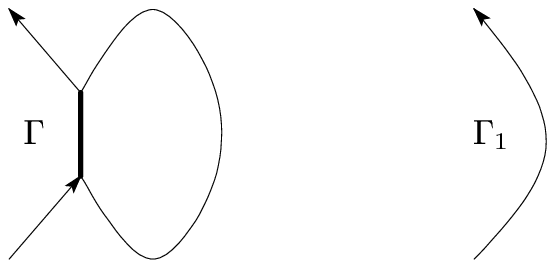}

\end{center}
\caption{}
\label{D1}
\end{figure}
\end{proposition}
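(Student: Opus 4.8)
The plan is to produce the isomorphism on the level of bases. By definition each $C(G)$ is the free $\Z$-module on the set of states of $G$, so a graded module isomorphism is exactly a grading-preserving bijection between the two generating sets, once the shifts are distributed. First I would read off from Figure~\ref{D1} that $\Gamma$ is obtained from $\Gamma_1$ by splicing a single singular edge $E$ into one of its normal edges so that one of the two thin strands meeting $E$ closes up into a small loop, while the other passes straight through; collapsing $E$ and erasing the loop recovers $\Gamma_1$. Write $t$ for the color of the through-strand. The state conditions at $E$ (the two leg colors distinct, the two head colors distinct, and the leg-set equal to the head-set) force the loop to carry a single color $\ell\in N$ with $\ell\ne t$, and they force the through-color to agree on the two sides of $E$. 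Hence restricting a state $\sigma$ of $\Gamma$ to the complement of the bubble yields a state $\bar\sigma$ of $\Gamma_1$, and recording the loop color $\ell$ sets up a bijection between the states of $\Gamma$ and the pairs $(\bar\sigma,\ell)$ with $\ell\in N\setminus\{t\}$; in particular every state of $\Gamma_1$ has exactly $n-1$ preimages.

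The heart of the argument is the grading bookkeeping. I would note that $\Gamma$ and $\Gamma_1$ share all their normal edges, singular edges and closed curves except for the extra singular edge $E$ and the extra loop $C$ of color $\ell$, and that the global constant $N_0^+-N_1^--\mathrm{wr}(D)\cdot n$ is common to both graphs and so cancels in the comparison. Therefore
\[
\mathrm{gr}_{\Gamma}(\sigma)=\mathrm{gr}_{\Gamma_1}(\bar\sigma)+\mathrm{deg}(E)+(2\ell-n-1)\,\mathrm{rot}(C).
\]
From the local picture $\mathrm{rot}(C)=-1$, and since the through-strand runs straight through $E$ while the loop occupies the other pair of legs and heads, the degree formula gives $\mathrm{deg}(E)=+1$ when $\ell>t$ and $\mathrm{deg}(E)=-1$ when $\ell<t$. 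Writing $s(\ell):=\mathrm{deg}(E)+(2\ell-n-1)\,\mathrm{rot}(C)$, a short case split shows $s(\ell)=n-2k$ with $k=\ell$ running over $\{1,\dots,t-1\}$ on the branch $\ell<t$ and $k=\ell-1$ running over $\{t,\dots,n-1\}$ on the branch $\ell>t$. Thus $s$ is a bijection from $N\setminus\{t\}$ onto $\{\,2-n+2i:i=0,\dots,n-2\,\}$, independently of $t$; this set is the exponent set of the quantum integer $[n-1]$, which is the expected state-sum value of the digon and serves as a sanity check.

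Finally I would assemble the isomorphism: grouping the states of $\Gamma$ according to the value $s(\ell)=2-n+2i$, the states with fixed $i$ are in grading-preserving bijection with the basis of $C(\Gamma_1)\{2-n+2i\}$, and summing over $i=0,\dots,n-2$ yields the claimed graded isomorphism $C(\Gamma)\cong\bigoplus_{i=0}^{n-2}C(\Gamma_1)\{2-n+2i\}$. The main obstacle I anticipate is the grading computation in the second step: one must check that the singular-edge degree and the loop's rotation contribution conspire to reproduce precisely the symmetric shift set for \emph{every} through-color $t$, which hinges on reading the orientation of the loop (hence the sign of $\mathrm{rot}(C)$) and the straight-through matching at $E$ correctly off the figure. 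The bijection of bases itself is routine once the state conditions at $E$ are spelled out.
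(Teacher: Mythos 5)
Your proof is correct, but it does not follow the paper's route, because the paper offers none: immediately after stating the three MOY-type decompositions, the paper says they are consequences of the Murakami--Ohtsuki--Yamada state-sum theory coming from quantum-group $R$-matrices, and explicitly omits the proofs. The implicit argument behind that citation is decategorified: by construction the graded rank of $C(G)$ equals the MOY evaluation $\langle G\rangle_n$, MOY's identity gives $\langle \Gamma\rangle_n=[n-1]\,\langle \Gamma_1\rangle_n$, and two free graded $\mathbb{Z}$-modules with equal finite graded ranks are abstractly isomorphic. What you do differently is to build the isomorphism on bases: the state conditions at the bubble's singular edge force the through-color $t$ to propagate across $E$ and the loop color $\ell$ to range over $N\setminus\{t\}$, and your shift function $s(\ell)=\mathrm{deg}(E)+(2\ell-n-1)\,\mathrm{rot}(C)$ hits each of $2-n,4-n,\dots,n-2$ exactly once for every $t$ --- a computation that works whichever side of the wide edge the loop sits on, since flipping $\mathrm{rot}(C)=\pm 1$ simultaneously flips the $\mathrm{deg}(E)$ case split, so the image set is unchanged (only which $\ell$ realizes which shift gets permuted). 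Your approach buys an explicit, canonical isomorphism valid over $\mathbb{Z}$, not merely an equality of graded ranks, which is what one would actually need to track how this decomposition interacts with the structure morphisms $\chi_0,\chi_1$, and it keeps the paper self-contained; the paper's citation buys brevity and situates the proposition in its representation-theoretic context. One point you rightly flag: the global constants $N_0^+-N_1^- -wr(D)\cdot n$ in the paper's grading are attached to an ambient diagram rather than to the local graphs $\Gamma,\Gamma_1$, so treating them as a common offset that cancels in the comparison is the correct (and necessary) reading of the statement.
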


%%%%%%%%%%%%%%%%%%%%%%%%%%%%%%%%%%%%%%%%%%%%%%%%%%%%%%%%%%%%%%%%%%%%%%%%%%%%%%%%%%%%%%%%%%%%%%
\begin{proposition}
Let $\Gamma$ and $\Gamma '$ be the graphs depicted in the figure~\ref{DD2}. There is an isomorphism as graded modules $C(\Gamma ) \cong C(\Gamma ' )\{ 1\} \oplus C(\Gamma ' ) \{ -1\}$.
\begin{figure}[htb]
\begin{center}

\includegraphics{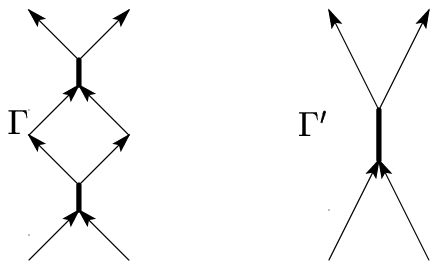}

\end{center}
\caption{}
\label{DD2}
\end{figure}
\end{proposition}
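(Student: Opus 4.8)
The plan is to exhibit an explicit, grading-respecting bijection between the state-bases of the two modules, organized according to the coloring of the four boundary normal edges. Write $\Gamma'$ for the single singular (wide) edge $E'$, and $\Gamma$ for the two singular edges $E_1, E_2$ stacked along a pair of internal normal edges, as in Figure~\ref{DD2}; both graphs share the same four boundary edges, namely two legs at the bottom and two heads at the top.

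First I would analyze the generators. A state of $\Gamma'$ is nothing but an admissible coloring of its four boundary edges: by the state condition the leg-pair and the head-pair carry the same two-element set $\{a,b\}$ with $a\neq b$, so the state is completely determined by the boundary. A state of $\Gamma$ with the same boundary is forced to color the two internal edges (the heads of $E_1$, equivalently the legs of $E_2$) by the same set $\{a,b\}$, and the only remaining freedom is the order in which $a,b$ sit on the left/right internal edges. Hence, for each admissible boundary $\beta$, there is exactly one state $s'_\beta$ of $\Gamma'$ and exactly two states $s^{+}_\beta, s^{-}_\beta$ of $\Gamma$, distinguished by the two orderings of $\{a,b\}$ on the internal edges. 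In particular $C(\Gamma)$ and $C(\Gamma')\oplus C(\Gamma')$ have bases in natural bijection.

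Next I would pin down the grading. The key observation is that collapsing the singular edges to points preserves colors, so the color-$a$ strand always runs from the $a$-colored boundary leg to the $a$-colored boundary head, and likewise for $b$, independently of the internal ordering. Consequently the three states $s'_\beta, s^{+}_\beta, s^{-}_\beta$ produce the same system of colored closed curves after collapse, so both the rotation term $\sum_{C}(2\sigma(C)-n-1)\,\mathrm{rot}(C)$ and the global constant $N_0^{+}-N_1^{-}-wr(D)\cdot n$ appearing in $\mathrm{gr}$ agree for all three. The entire grading difference is therefore concentrated in $\sum_E \mathrm{deg}(E)$, and it remains to compare $\mathrm{deg}(E_1)+\mathrm{deg}(E_2)$ for the two internal orderings against $\mathrm{deg}(E')$. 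I would run the finite case check over the four boundary types (legs and heads each ordered $(a,b)$ or $(b,a)$): in every case the two internal orderings give $\mathrm{deg}(E_1)+\mathrm{deg}(E_2)=\mathrm{deg}(E')+1$ and $\mathrm{deg}(E_1)+\mathrm{deg}(E_2)=\mathrm{deg}(E')-1$, respectively. For instance, when both legs and heads are $(a,b)$ with $a<b$, so that $\mathrm{deg}(E')=1$, the internal ordering $(a,b)$ yields $\mathrm{deg}(E_1)=\mathrm{deg}(E_2)=1$ (total $2$), while the ordering $(b,a)$ yields $\mathrm{deg}(E_1)=\mathrm{deg}(E_2)=0$ (total $0$); the remaining three boundary types behave identically up to the expected shift.

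Finally I would assemble the isomorphism: send $s'_\beta\in C(\Gamma')\{1\}$ to the $\Gamma$-state realizing $\mathrm{deg}(E')+1$ and $s'_\beta\in C(\Gamma')\{-1\}$ to the one realizing $\mathrm{deg}(E')-1$. By the previous step this is a bijection of bases that matches gradings exactly, hence a graded isomorphism $C(\Gamma)\cong C(\Gamma')\{1\}\oplus C(\Gamma')\{-1\}$ of free $\Z$-modules. The only genuine work is the degree bookkeeping of the third step, and the point to be most careful about is the claim that the collapsed curve systems coincide for all three states: this is precisely what guarantees that the non-local terms of $\mathrm{gr}$ cancel in the comparison and leave exactly the $[2]=q+q^{-1}$ shift.
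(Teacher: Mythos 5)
Your proof is correct, but there is nothing in the paper to compare it against: immediately after stating the three MOY decomposition propositions, the author writes that they are consequences of the representation-theoretic work of Murakami--Ohtsuki--Yamada and explicitly says ``We omit the proofs.'' So your argument is not an alternative route --- it is the missing elementary argument, and it is the natural one given the paper's state-sum definition of $C(G)$: the paper's citation only secures the decategorified Laurent-polynomial identity, whereas your bijection of state bases produces the actual isomorphism of graded free $\Z$-modules that the proposition asserts. All three of your steps check out. The basis count is right: a state of $\Gamma'$ is exactly an admissible boundary coloring by a two-element set $\{a,b\}$, and each such boundary extends to precisely two states of $\Gamma$, given by the two orderings of $\{a,b\}$ on the digon edges. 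The locality claim is right, with one point you should make explicit: collapsing a singular edge reconnects legs to heads of equal color, so all three states give the same colored curve \emph{connectivity}, but to conclude the rotation terms agree you should also note that the two internal routings are isotopic arcs rel endpoints, so the rotation numbers themselves coincide. I verified the degree bookkeeping in all four boundary types, not only the one you display: against $\deg(E')=1,\,0,\,0,\,-1$ (straight increasing, the two crossed types, straight decreasing) the two internal orderings give totals $(2,0)$, $(1,-1)$, $(1,-1)$, $(0,-2)$ respectively, and in every case it is the increasing internal ordering that realizes $\deg(E')+1$, so your assignment of $s'_\beta$ in $C(\Gamma')\{1\}$ to $s^{+}_\beta$ and of $s'_\beta$ in $C(\Gamma')\{-1\}$ to $s^{-}_\beta$ is a well-defined grading-preserving bijection of bases, hence a graded isomorphism. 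As a sanity check, your computation decategorifies to $q^2+1=q\,(q+q^{-1})$ and $q+q^{-1}=1\cdot(q+q^{-1})$, which is exactly the MOY digon relation this proposition categorifies.
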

%%%%%%%%%%%%%%%%%%%%%%%%%%%%%%%%%%%%%%%%%%%%%%%%%%%%%%%%%%%%%%%%%%%%%%%%%%%%%%%%%%%%%%%%%%%%%%55
\begin{proposition}
Consider graphs $\Gamma_1,\ \Gamma_2,\ \Gamma_3,\ \Gamma_4$ depicted in the figure~\ref{BBBB}. There is an isomorphism as graded modules $C(\Gamma_1) \oplus C(\Gamma_2) \cong C(\Gamma_3) \oplus C(\Gamma_4)$.
\begin{figure}[htb]
\begin{center}

\includegraphics[scale=0.8]{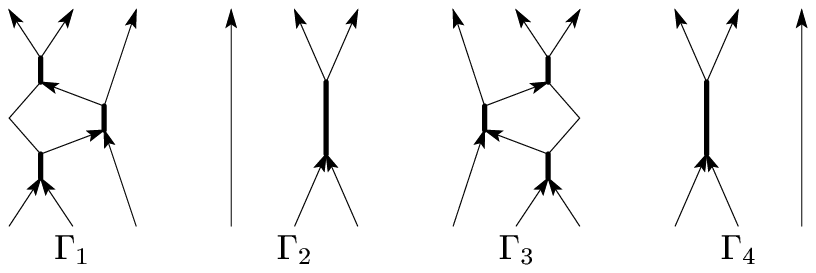}

\end{center}
\caption{}
\label{BBBB}
\end{figure}
\end{proposition}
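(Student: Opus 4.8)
The plan is to reduce the claimed isomorphism to a purely local, state-by-state count. Since each $C(\Gamma_i)$ is a free $\Z$-module whose basis is the set of states (colored graphs) of $\Gamma_i$, a graded isomorphism between the two sides is equivalent to an equality of graded ranks, that is, to the identity of Poincar\'e polynomials $\gdim C(\Gamma_1)+\gdim C(\Gamma_2)=\gdim C(\Gamma_3)+\gdim C(\Gamma_4)$. I would therefore first observe that the four graphs $\Gamma_1,\dots,\Gamma_4$ coincide outside a small disk, so every state decomposes as a fixed coloring $\beta$ of the boundary edges together with a coloring of the interior. This yields a direct-sum decomposition $C(\Gamma_i)=\bigoplus_\beta C(\Gamma_i)_\beta$ indexed by admissible boundary colorings $\beta$, and it suffices to prove the isomorphism summand by summand, $C(\Gamma_1)_\beta\oplus C(\Gamma_2)_\beta\cong C(\Gamma_3)_\beta\oplus C(\Gamma_4)_\beta$, for each $\beta$.

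Next, for a fixed $\beta$ I would enumerate the interior states of each $\Gamma_i$ extending $\beta$, subject to the state conditions at each singular edge (the set of colors on the legs equals that on the heads, with the two heads, and the two legs, colored distinctly). The number of admissible interior colorings depends only on how many distinct colors $\beta$ prescribes on the boundary and on their relative order, so the argument splits into a short case analysis governed by the values of $\pi(a,b)$. In each case I would compute the degree of every interior state from the grading formula: the constant $N_0^+-N_1^--wr(D)\cdot n$ and the contributions of the external region are common to all four graphs and hence cancel, so only the singular-edge degrees $\mathrm{deg}(E)$ and the local part of the rotation sum $\sum_C(2\sigma(C)-n-1)\,\mathrm{rot}(C)$ remain. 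Collecting the resulting powers of $q$ reproduces the Murakami--Ohtsuki--Yamada square identity of \cite{5}, which establishes the equality of graded ranks in each case.

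To upgrade the dimension count to an honest graded isomorphism I would exhibit the matching explicitly rather than merely counting. The color-exchange procedure introduced for the morphisms $\chi_0,\chi_1$ already provides degree-preserving bijections between the interior colorings realizing the move, and I would assemble these, together with the identity on the boundary data $\beta$, into the required isomorphism, verifying that it preserves $\mathrm{gr}$ on each basis element. Since the modules are free over $\Z$, a compatible basis bijection gives the isomorphism directly and over the integers.

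The step I expect to be the main obstacle is the bookkeeping of the rotation-number term under the local reconnection. Deleting and reattaching singular edges when passing between the $\Gamma_i$ changes which colored normal edges close up into the same oriented circle $C$, so the factors $(2\sigma(C)-n-1)\,\mathrm{rot}(C)$ are genuinely global quantities. What I must verify is that their \emph{difference} across the move is local, determined entirely by $\beta$ and the interior coloring, so that it can be absorbed into the case analysis above. Making this precise, essentially by showing that $\mathrm{rot}$ is additive over the arcs cut out by the boundary of the local disk and that the external arcs contribute identically on both sides, is the technical heart of the argument.
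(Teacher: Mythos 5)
Your proposal is correct in outline, but you should know it is not the paper's argument, because the paper offers no proof of this proposition at all: immediately after stating the three MOY-relation propositions the author writes that they are consequences of the representation theory of the quantum group and its $R$-matrix and explicitly says ``We omit the proofs,'' deferring to Murakami--Ohtsuki--Yamada (the citation given is \cite{5}, though the intended bibliography entry is evidently \cite{6}). Your route is therefore a genuinely different, self-contained state-sum argument, and two remarks are in order. First, your reduction to graded ranks already finishes the proof once the local count is done: each $C(\Gamma_i)$ is by definition free over $\Z$ with finitely many basis states in each degree, and two free graded $\Z$-modules of equal graded rank are isomorphic; so your final step of assembling an explicit color-exchange bijection is unnecessary, and is in fact doubtful as stated, since the MOY square identity matches \emph{sums} of weighted states rather than individual states --- fortunately nothing in your argument depends on it. Second, you correctly isolate the real technical content, namely the localization of the rotation-number term: writing $\mathrm{rot}(C)$ as the integral of the turning angle along $C$ makes it additive over arcs, so the arcs outside the disk contribute identically to all four graphs and the comparison collapses to the finite case analysis over boundary colorings carried out by Murakami--Ohtsuki--Yamada. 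What the paper's citation buys is brevity, at the cost of leaving implicit the bridge between the graded rank of $C(\Gamma)$ and the polynomial $\langle \Gamma \rangle_n$; what your argument buys is an elementary proof valid directly over the integers, which is worth having since integral coefficients are precisely the advantage this paper claims over the Khovanov--Rozansky construction.
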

These propositions are the  consequence  of  \cite{5}, derived from the representation of quantum group and its $R$-matrix , also have close connection to the theory of Kazhdan-Lusztig basis of Hecke algebras but we shall not pursue this line of thought.   We omit the proofs.
%%%%%%%%%%%%%%%%%%%%%%%%%%%%%%%%%%%%%%55%%%%%%%%%%%%%%%%%%%%%%%%%%%%%%%%%%%%%%%%%%%%%%%%%%%%%%%%5
%%%%%%%%%%%%%%%%%%%%%%%%%%%%%%%%%%%%%%%%%%%%%%%%%%%%%%%%%%%%%%%%%%%%%%%%%%%%%%%%%%%%%%%%%%%%%%%%%%%%%%%%%%%%%%%%%%%%
%%%%%%%%%%%%%%%%%%%%%%%%%%%%%%%%%%%%%%%%%%%%%%%%%%%%%

For a link diagram $D$ and $Cr\supset cr$ be a crossing set, we have constructed a complex associated  with $C(D(cr)), \ \chi_0,\ \chi_1$.
We assume to  locate $C(D(cr))$ at the cohomological degree$= N_1 ^+ - N_0 ^-$, and we will denote  
by $C^*(D)$ the complex associated with the diagram constructed by the shifted cube structure.
We state the main theorem whose proof is stated in the next section.

\begin{theorem}

\begin{itemize}
\item Let $D$ be an oriented link diagram  and $H\langle D \rangle$ be the
homology groups of $C^*(D)$.
Then $H\langle D \rangle$ is
invariant under the Reidemeister moves. 

\item The graded Euler characteristic of this homology is equal to the quantum polynomial invariant.
$$
\langle D \rangle _n= \sum_{i,\ j} { (-1)^{i} q^{j} rank(H^{i,\ j}(D))}
$$
where $i$ is the cohomological grading and $j$ is the grading of colored graphs.
\end{itemize}
\end{theorem}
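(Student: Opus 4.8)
I would prove the two assertions in reverse order, treating the graded Euler characteristic first because it reduces to a direct graded-dimension computation, and the invariance second because it carries the main content. For a fixed resolution $D(cr)$ I would compute the graded dimension of $C(D(cr))$ by summing $q^{\mathrm{gr}(\sigma)}$ over all states $\sigma$. Comparing $\mathrm{gr}(\sigma)$ with the state-sum weight $\prod_v \mathrm{wt}(v,\sigma)\,q^{\mathrm{rot}(\sigma)}$, one sees termwise that the singular-edge degrees $\mathrm{deg}(E)\in\{1,0,-1\}$ reproduce the weights $\mathrm{wt}(E)\in\{q,1,q^{-1}\}$, that $\sum_C (2\sigma(C)-n-1)\mathrm{rot}(C)$ reproduces $q^{\mathrm{rot}(\sigma)}$, and that the constant $N_0^+-N_1^--wr(D)\cdot n$ absorbs the parallel-edge degrees and the normalization; hence the graded dimension of $C(D(cr))$ equals $q^{N_0^+-N_1^--wr(D)\cdot n}\langle D(cr)\rangle_n$. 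Since the graded Euler characteristic of a complex equals the alternating sum of the graded dimensions of its chain groups, and $C(D(cr))$ sits in cohomological degree $N_1^+-N_0^-$, the signs $(-1)^{N_1^+-N_0^-}$ collect---after factoring out a global sign depending only on the number of negative crossings---into the factor $(-1)^{\#\{cr\}}$ of the state sum, and summing over the cube reproduces $\langle D\rangle_n$ by the Murakami--Ohtsuki--Yamada theorem.

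For invariance, since $D$ is the closure of a braid, a link invariant is exactly a quantity constant on the Markov equivalence class of the braid; by Markov's theorem it therefore suffices to show that the homotopy type of $C^*(D)$ is preserved under the braid relation $\sigma_i\sigma_{i+1}\sigma_i=\sigma_{i+1}\sigma_i\sigma_{i+1}$ (a Reidemeister~III move), under conjugation (which, after cancelling $\gamma\gamma^{-1}$ through the closure, reduces to Reidemeister~II moves), and under stabilization $\beta\mapsto\beta\sigma_n^{\pm1}$ (a Reidemeister~I move); far commutativity of distant generators is a planar isotopy and is automatic. In every case the machinery is the same: apply the MOY isomorphisms of the Propositions attached to Figures~\ref{D1}, \ref{DD2}, and \ref{BBBB} to split the relevant colored graph spaces into shifted direct summands, and then invoke the standard Gaussian elimination lemma for complexes---cancellation of an acyclic summand across which the differential restricts to an isomorphism---to collapse the larger complex onto the smaller one.

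Concretely, for stabilization the local complex is the two-term complex $C(\Gamma^0)\to C(\Gamma^1)$ of the new crossing; the digon and circle relations identify $C(\Gamma^1)$ with a sum of shifted copies of the destabilized graph space, and the exceptional terms built into $\chi_0,\chi_1$ for the configuration where a head or leg of the singular edge bounds a simple loop are precisely what force the connecting map to restrict to an isomorphism on the summand to be cancelled, leaving after Gaussian elimination the destabilized complex with the shift dictated by the writhe normalization. For the braid relation both sides resolve into a cube over three crossings; the square relation of Figure~\ref{BBBB} supplies a common refinement, and matching the two resulting complexes after Gaussian elimination yields Reidemeister~III invariance, while Reidemeister~II (hence conjugation) is the analogous but simpler cancellation governed by Figure~\ref{DD2}.

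The main obstacle will be verifying that the combinatorially defined maps $\chi_0,\chi_1$---which act by the color-exchange procedure along distinguished circles---are intertwined with the MOY isomorphisms so that, in each decomposition, the differential acquires the block-triangular form with invertible corner that Gaussian elimination requires. Establishing this amounts to a careful case analysis of how a distinguished circle, together with its source and target singular edges, meets the local tangle being altered, including the degenerate loop configurations responsible for the exceptional terms; it is here, rather than in the formal cancellations, that the real work lies. Once the intertwining is confirmed, the three families of cancellations run formally parallel to those of Khovanov--Rozansky theory and complete the argument.
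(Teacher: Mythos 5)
Your treatment of the second bullet coincides with the paper's: the grading of a colored graph was engineered so that the graded dimension of each $C(D(cr))$ reproduces the corresponding term of the MOY state sum, and the paper disposes of this bullet in a single line by citing its MOY propositions and the Murakami--Ohtsuki--Yamada theorem. Your remark that the cohomological placement at $N_1^+ - N_0^-$ matches $(-1)^{\#\{cr\}}$ only up to a global sign $(-1)^{N^-}$ is a correct refinement that the paper silently glosses over. Your reduction of the first bullet to Markov moves also matches the paper, which explicitly restricts to braid closures. Where you genuinely diverge is in how invariance is proved. You propose the standard Khovanov--Rozansky scheme: split the colored graph spaces by the MOY isomorphisms (circle, digon, square relations) and then cancel contractible summands by Gaussian elimination. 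The paper never invokes its MOY propositions in the invariance proofs at all; they are stated only as isomorphisms of graded modules, with proofs omitted, and no compatibility with $\chi_0,\chi_1$ is ever established. Instead, each move is handled by an explicit decomposition built directly from the coloring combinatorics: for Reidemeister I, maps $f$ (disjoint union with a circle colored $n$) and $\epsilon$ (delete that circle after $n-j$ color exchanges) satisfying $\epsilon f=\mathrm{Id}$ and $\epsilon\chi=0$; for Reidemeister II and III, direct sums $C(D)\cong Y_1\oplus Y_2\oplus Y_3$ in which $Y_2,Y_3$ are contractible, $Y_1$ is the graph of a map $\alpha$ obtained by composing an ``inverse procedure'' of $\chi_1$ with a structure map (together with, for R III, a submodule $W$ specified by long lists of coloring inequalities), followed by a generator-by-generator correspondence between $Y_1$ and $Y_1'$.

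The comparison cuts both ways. Your route is more modular and parallels the literature, and if the MOY isomorphisms were known to intertwine the structure maps it would be shorter and cleaner than the paper's case lists. But that intertwining --- which you yourself flag as ``the main obstacle'' and ``where the real work lies'' --- is exactly what is unavailable in this paper, and establishing it would require the same kind of explicit analysis of distinguished circles, source and target singular edges, and exceptional loop terms that the paper's hand-built decompositions carry out. In other words, the step you defer is not a routine verification but is the entire content of the paper's Section 4; conversely, the paper's approach never needs chain-level MOY isomorphisms, at the price of unilluminating combinatorics. As a proof your proposal is therefore an outline with its central lemma unproved; as a strategy it is sound and, once the intertwining is supplied (or the paper's decompositions are imported), it completes along the lines you describe.
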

\begin{proof}
The second statement follows from  the above propositions and \cite{5}.
\end{proof}
%%%%%%%%%%%%%%%%%%%%%%%%%%%%%%%%%%%%%%%%%%%%%%%%%%%%%%%%%%%%%%%%%%%%%%%%%%%%%%%%%%%%%%%%%%%%%%%%%%%%%%%%%%%%%%%%%%%%%%%%%%%%%%%%%%5
%%%%%%%%%%%%%%%%%%%%%%%%%%%%%%%%%%%%%%%%%%%%%%%%%%%%%%%%%%%%%%%%%%%%%%%%%%%%%%%%%%%%%%%%%%%%%%%%%%%%%%%%%%%%%%%%%%%%%%%%%%%%%%%%%%5
%%%%%%%%%%%%%%%%%%%%%%%%%%%%%%%%%%%%%%%%%%%%%%%%%%%%%%%%%%%%%%%%%%%%%%%%%%%%%%%%%%%%%%%%%%%%%%%%%%%%%%%%%%%%%%%%%%%%%%%%%%%%%%%%%%5
%%%%%%%%%%%%%%%%%%%%%%%%%%%%%%%%%%%%%%%%%%%%%%%%%%%%%%%%%%%%%%%%%%%%%%%%%%%%%%%%%%%%%%%%%%%%%%%%%%%%%%%%%%%%%%%%%%%%%%%%%%%%%%%%%%5

\section{Invariance under the Reidemeister moves}

We give the proof of the main  theorem stated above. These proofs  are similar to the one in \cite{2}. 
\subsection{R1-move}
Consider the type-I Reidemeister move.
\[ D=\xy
(0,10)*{}="T";
(0,-10)*{}="B";
(0,5)*{}="T'";
(0,-5)*{}="B'";
{\ar"T'";"T"};
"B";"B'" **\dir{-};
(3,0)*{}="MB";
(7,0)*{}="LB";
"T'";"LB" **\crv{(1,-4) & (7,-4)}; \POS?(.25)*{\hole}="2z";
"LB"; "2z" **\crv{(8,6) & (2,6)};
"2z"; "B'" **\crv{(0,-3)};
\endxy \ \ \ \ \ \ \ \ \
\ D' =
\xy
{\ar (0 , -10) ; (0 , 10)};
\endxy
\]

\begin{proposition}
For diagrams $D$ and $D'$ depicted above,  $C(D)$ is quasi-isomorphic to $C(D')$.
\end{proposition}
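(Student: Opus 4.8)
The plan is to exhibit $C(D)$ as a mapping cone along the extra self-crossing of the curl and then to collapse it onto $C(D')$ by Gaussian elimination, using the MOY relations established above. Write $c$ for the self-crossing of $D$. Since $D$ has exactly one crossing more than $D'$, the $Cr$-cube of $D$ factors through the two resolutions of $c$, so that $C^*(D)$ is, up to the global grading shift $N_0^+ - N_1^- - wr(D)\cdot n$ and the homological shift $N_1^+ - N_0^-$, the total complex of
\[
C(\Gamma^0) \xrightarrow{\ \chi\ } C(\Gamma^1),
\]
where $\Gamma^0$ and $\Gamma^1$ are the diagrams obtained by $0$- and $1$-resolving $c$ while leaving the rest of $D$ untouched, and $\chi$ is the corresponding structure morphism ($\chi_0$ or $\chi_1$, according to the sign of $c$). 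First I would identify these two resolutions geometrically: one of them is $D'$ together with a disjoint colored circle, and the other is $D'$ carrying a single singular edge whose remaining head and leg are joined by a short arc (a "tadpole").

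Next I would evaluate both ends in terms of $C(D')$. The disjoint colored circle contributes, directly from the definition of $C(G)$ together with the rotation-number grading, a sum $\bigoplus_{i=1}^{n} C(D')\{2i-n-1\}$, i.e. $n$ graded copies of $C(D')$; the tadpole is reduced by the first MOY relation $C(\Gamma)\cong \bigoplus_{i=0}^{n-2} C(\Gamma_1)\{2-n+2i\}$ to $n-1$ graded copies of $C(D')$. Recording these graded multiplicities as $[n]$ and $[n-1]$ and using the identity $[n] = q^{-(n-1)} + q[n-1]$, I would match $n-1$ of the $n$ summands on the $C(\Gamma^0)$ side with the $n-1$ summands on the $C(\Gamma^1)$ side, leaving exactly one unmatched copy $C(D')\{1-n\}$.

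The core step is to read off the differential $\chi$ in these MOY-decomposed bases and show it is triangular with an invertible block, i.e. that $\chi$ restricts to an isomorphism from the $(n-1)$-fold matched part of $C(\Gamma^0)$ onto $C(\Gamma^1)$. Granting this, Gaussian elimination, the cancellation of the acyclic two-term subcomplex formed by that invertible block, produces a deformation retraction of the total complex onto the single surviving summand, namely one graded copy of $C(D')$. Such a retraction is a homotopy equivalence and in particular a quasi-isomorphism. I would then check that the global shifts recorded in $\mathrm{gr}$ (through $N_0^+ - N_1^- - wr(D)\cdot n$ and the rotation term of the erased circle) together with the homological placement $N_1^+ - N_0^-$ are exactly those needed to absorb the residual shift $\{1-n\}$, so that the retract is $C(D')$ with no net shift; the mirror versions of the curl (the other sign, over versus under) are handled symmetrically, the dual one using $\chi_0$ and applying the MOY reduction on the $0$-resolution side instead.

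The main obstacle will be computing $\chi$ against the MOY isomorphisms. The color-exchange morphisms $\chi_0$ and $\chi_1$ are defined combinatorially through distinguished circles together with the exceptional (Reidemeister-I) terms introduced earlier, so identifying their matrix in the basis coming from the circle-removal and digon relations, and in particular verifying that the exceptional terms are precisely what supplies the invertible block, is the delicate part. Tracking the gradings finely enough to confirm that this block sits in degree $0$ and that exactly one copy of $C(D')$ survives with zero residual shift is the second place where genuine care is required.
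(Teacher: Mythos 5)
Your proposal is correct in strategy but organizes the argument quite differently from the paper. The paper also writes $C(D)$ as a two-term complex over the curl crossing, but it never invokes the MOY relations here: instead it constructs explicit chain maps $f:C(D')\to\oplus_{B\ni a}C(D(B))$ (disjoint union with a circle colored $n$) and $\epsilon:\oplus_{B\ni a}C(D(B))\to C(D')$ (delete the circle if its color is $n$; if its color is $j<n$, multiply by $(-1)^{n-j}$, perform $n-j$ color exchanges of distinguished circles, then delete), checks $\epsilon\circ f=\mathrm{Id}$ and $\epsilon\circ\chi=0$, and splits $C(D)\cong f(C(D'))\oplus\{0\to\oplus_{A\not\ni a}C(D(A))\to\ker\epsilon\to 0\}$ with the second summand contractible. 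Your route instead decomposes both ends of the cone by circle removal ($[n]$ copies) and the first MOY relation ($[n-1]$ copies), matches them via $[n]=q^{-(n-1)}+q[n-1]$, and cancels an invertible block by Gaussian elimination; this is the standard Khovanov--Rozansky/Bar-Natan ``delooping plus elimination'' argument, and it makes the quantum-dimension counting and the survival of exactly one shifted copy of $C(D')$ transparent. What the paper's route buys is that it never needs the MOY isomorphisms in chain-level form (in the paper they are stated only as isomorphisms of graded modules, with proofs omitted), whereas your change of basis requires choosing them explicitly. Note that both arguments hinge on the same unverified crux: you defer the invertibility of the $(n-1)\times(n-1)$ block (``granting this''), while the paper asserts $\epsilon\circ\chi=0$ and the contractibility of the complement ``by the definitions'' without computing the action of $\chi$ on colorings; indeed the paper's $\epsilon$, with its alternating signs and iterated color exchanges, is exactly the back-substitution that your Gaussian elimination would produce, so the two proofs are performing the same linear algebra in different packaging.
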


\begin{proof}
Let $a \in Cr(D)$ be a crossing of $D$ depicted above.  
We can consider  $\oplus _{ A\subset Cr(D),\ a\notin A } {C(D(A))}$ and $\oplus _{B\subset Cr(D),\ a\in B } {C(D(B))}$ as subcomplexes  of $C(D)$ whose differentials are induced from the differential of $C(D)$.
 Then $C(D)$  can be written as  the total complex of the bicomplex 
$$
\xymatrix{
0 \ar[r] & \oplus_{ A\subset Cr(D),\ a\notin A } {C(D(A))}  \ar[r]^{\chi} &  \oplus _{ B\subset Cr(D),\ a\in B } {C(D(B))} \ar[r] & 0
}
$$ where  
 the map  denoted $\chi$ is  induced by  the structure morphisms of cubes 
$ C(D(A)) \longrightarrow  C(D(a\sqcup A))$ , $A\subset Cr(D) $, $a\notin \ A $.

We will define  morphisms  $ f:\ C(D') \longrightarrow \oplus_{ B\subset Cr(D),\ a\in B } {C(D(B))} $ and    $ \epsilon :\ \oplus_{ B\subset Cr(D),\ a\in B } {C(D(B))}  \longrightarrow C(D')  $ as follows.

\begin{figure} [htb] 
\begin{center}
\includegraphics{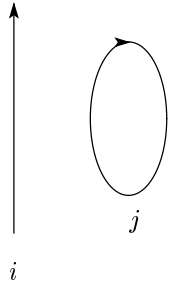}
\end{center}
\caption{An element of  1-resolution of the crossing $a\in Cr(D)$} 
\label{1-resR1}
\end{figure}

Define $\epsilon:$  $\oplus_{B\subset Cr(D),\ a\in  B} {C(D(B))}  \longrightarrow  C(D')$ to be a map which simply delete  the simple closed curve depicted in the Fig.~\ref{1-resR1} if its coloring is equal to $n$, otherwise  (if its coloring $j$ is not equal to $n$),  multiple $(-1)^{n-j}$ and take  $n-j$ distinguished circles which contain the normal edge colored $i$ depicted in the Fig.~\ref{1-resR1}, then exchange  colorings of each distinguished circles and  delete the simple closed curves depicted in the figure above and consider  as an element of $C(D')$.  Exchanging colorings of $(n-j)$ distinguished circles shift down its quantum degree $(n-j)$.  So the map $\epsilon $ is a quantum degree preserving morphism.\\ 
If we add a simple  circle which is colored $n$ to an element of $C(D')$ and consider it as an element of 1-resolution of $D$, we can consider this procedure  as a morphism  from $C(D')$ to $\oplus_{A\subset Cr(D),\ a\notin A} {C(D(A))}$ and we will denote $f$.

By the definitions of morphisms, we can check the formulas $\epsilon \circ  f = Id$,  $\epsilon \circ \chi =0$  and  we have a following formula. 
$$
\xymatrix{
C(D)\cong f(C(D')) \oplus \{ 0 \ar[r] & \oplus _{ A\subset Cr(D),\ a\notin A } {C(D(B))} \ar[r]  &  ker(\epsilon ) \ar[r]  &0 \}
}
$$     

Then we can say that the complex is isomorphic to the direct sum of the
contractible subcomplex
and the nontrivial part which  is isomorphic to 
the $C(D')$ by the formula $\epsilon f=Id$.  
This establishes  a homotopy equivalence between $C(D)$ and $C(D')$.
The invariance under other cases of Reidemeister-1 move can be verified similarly.
\end{proof}
%%%%%%%%%%%%%%%%%%%%%%%%%%%%%%%%%%%%%%%%%%%%%%%%%%%%%%%%%%%%%%%%%%%%%%%%%%%%%%%%%%%%%%%%%%%%%%%%%%%%%%%%%%%%%%%%%%%%%%%%%%%%%%%%%%%%%%%%%
\subsection{Reidemeister-2 move}
Consider diagrams $D$ and $D'$ depicted in 
the figure\ref{ta1}.
\begin{figure} [htb] 
\begin{center}
\includegraphics{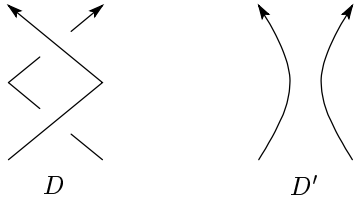}
\end{center}
\caption{Type IIa move} 
\label{ta1}
\end{figure}

\begin{proposition}
The complex $C(D)$ is quasi-isomorphic to
$C(D')$ for $D$ and $D'$ depicted in the Fig.~\ref{ta1}.
\end{proposition}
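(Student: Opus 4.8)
The plan is to localize, as in the $R1$ argument, at the two crossings $a$ and $b$ created by the type-IIa move (one positive, one negative), and to split $C(D)$ along the four resolutions of the pair $(a,b)$. Writing $00,01,10,11$ for the two resolutions of $(a,b)$ and recording the homological shift $N_1^+ - N_0^-$, the four colored graph spaces sit in degrees $-1,0,0,1$, so that $C(D)$ is the total complex of the $2$-cube
\[
\xymatrix{
 & C(D(01)) \ar[dr] & \\
C(D(00)) \ar[ur] \ar[dr] & & C(D(11))\\
 & C(D(10)) \ar[ur] &
}
\]
Geometrically $D(01)$ (both crossings given their oriented resolution) is just the two parallel strands, i.e. the diagram $D'$; the corners $D(00)$ and $D(11)$ each carry a single singular edge; and $D(10)$ carries two singular edges joined by two normal edges, i.e. a bigon.

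The key structural input is the MOY bigon relation of Fig.~\ref{DD2}, $C(\Gamma)\cong C(\Gamma')\{1\}\oplus C(\Gamma')\{-1\}$, applied to $D(10)$: this splits the bigon corner as $C(D(10))\cong C(W)\{1\}\oplus C(W)\{-1\}$, where $W$ denotes the single-singular-edge tangle, which is precisely the graph underlying both $C(D(00))$ and $C(D(11))$. I would then Gaussian-eliminate: the structure map $C(D(00))\to C(D(10))$ should have an invertible component onto one of the two summands $C(W)\{\pm 1\}$, allowing cancellation of the degree-$(-1)$ corner against that summand; symmetrically the map $C(D(10))\to C(D(11))$ should restrict to an isomorphism of the remaining summand onto the degree-$(+1)$ corner, cancelling it. After both cancellations only $C(D(01))\cong C(D')$ survives, in homological degree $0$, with vanishing induced (zigzag) differential, yielding the homotopy equivalence $C(D)\simeq C(D')$.

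Executing the two cancellations faithfully is the crux. I must verify that the components of $\chi_0,\chi_1$ entering and leaving the bigon corner are genuine degree-$0$ isomorphisms onto the predicted $\{+1\}$ and $\{-1\}$ summands of $C(D(10))$ produced by the MOY splitting, that the relevant quantum gradings match once the intrinsic shift in $\mathrm{gr}(\sigma)$ is accounted for, and that the zigzag differential induced on $C(D(01))$ by eliminating $C(D(00))$ and $C(D(11))$ is zero (or null-homotopic). Because $\chi_0$ and $\chi_1$ are defined through the color-exchange procedure on distinguished circles, this reduces to a local bookkeeping of how exchanges across the bigon interact with the MOY isomorphism, tracking degrees, the signs coming from $E(\L)$, and the source/target-vertex conditions; I expect this matching of the combinatorially defined structure maps with the representation-theoretic splitting to be the main obstacle, the homological algebra then being routine. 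The orientation variant (type IIb) and the case with the roles of $a$ and $b$ interchanged are handled identically, invoking the appropriate MOY relation in place of Fig.~\ref{DD2}.
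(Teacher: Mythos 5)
Your plan is workable, but it takes a genuinely different route from the paper. You localize at the two new crossings, identify the four corners and their homological degrees $-1,0,0,1$ correctly, split the bigon corner via the MOY relation of Fig.~\ref{DD2} into two shifted copies of the single-singular-edge space, and cancel those copies against the degree $-1$ and degree $+1$ corners by Gaussian elimination; your grading bookkeeping (the $N_0^+ - N_1^-$ term in $\mathrm{gr}$ makes the degree $-1$ and $+1$ corners match the $\{+1\}$ and $\{-1\}$ summands respectively) and your observation that no zigzag differential can survive on a single remaining corner are both right. The paper, by contrast, never invokes the MOY relations in its invariance proofs. It works instead with partial inverses of the structure maps: $\psi$ inverts $\chi_1\colon C(D_{00})\to C(D_{10})$ (degree $-1$ corner to oriented-resolution corner, in the paper's indexing), $\alpha=\chi_0\circ\psi$ maps the oriented-resolution corner into the bigon corner, and $C(D)$ is decomposed as $Y_1\oplus Y_2\oplus Y_3$, where $Y_1=\{(x,\alpha(x))\}$ is the graph of $\alpha$ (isomorphic to $C(D')$ as modules and quasi-isomorphic to it as complexes), $Y_2$ is the subcomplex generated by the degree $-1$ corner, and $Y_3$ is an analogous graph-type subcomplex built from a partial inverse $\beta$ of the structure map into the degree $+1$ corner; $Y_2$ and $Y_3$ are contractible. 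The trade-off is this: your argument is modular and parallels the Khovanov--Rozansky matrix-factorization proof, but it leans on a proposition the paper states without proof and only as an abstract isomorphism of graded modules --- nothing is asserted about its compatibility with $\chi_0,\chi_1$ --- so the verification you defer (that the differential components are degree-$0$ isomorphisms onto the matching summands, naturally with respect to the differentials coming from the remaining crossings) is precisely where the content lies, and carrying it out in this combinatorial color-exchange setting essentially amounts to constructing the paper's maps $\psi$ and $\beta$. In that sense the two proofs converge at the same technical core; the paper's version is more self-contained, while yours makes the underlying mechanism (delooping plus elimination) more transparent.
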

\begin{proof}

\begin{figure} [htb] 
\begin{center}
\includegraphics[scale=0.5]{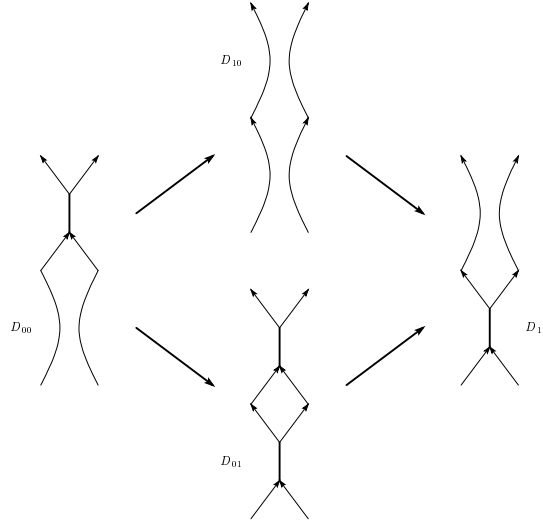}
\end{center}
\caption{Resolutions of $D$ } 
\label{reide2res}
\end{figure}

Fig.~\ref{reide2res} depicts the four resolutions of $D$ and morphisms between them.\\
Let $\psi $ be a morphism $\psi :\ C(D_{10}) \longrightarrow C(D_{00})$ which is defined by the  inverse procedure of the structure morphism $\chi_1:\ C(D_{00}): \ \longrightarrow C(D_{10})$ (i.e, take distinguished subgraphs and exchanging its colorings as to preserve the degree and sum up all of its possibilities and the  inverse of extra term. )  We define $\alpha $ to be a composition of $\psi$ and the structure morphism $\chi_0 :\ C(D_{}) \longrightarrow C(D_{01})$.\\
Let $Y_1=\{  (x,\alpha (x)) \in C(D_{10})\oplus  C(D_{01}),\ x\in C(D_{10})   \}$ .
We  can induce a differential on $Y_1$ by the differential on $C(D)$ and direct calculations shows that $Y_1$ is stable under the induced differentials. By the construction, it is obvious that $\chi_0 (x) + \chi_1 (\alpha (x)) =0 $ . \\
Let$Y_2$ be the subcomplex of $C(D)$ generated by $C(D _{00})$.\\
Let$Y_3= \{( \beta (x),\ y) \in C(D_{01})\oplus C(D_{11}),\ x,y\in C(D_{11})     \}$ where $\beta$ is a morphism $C(D_{11}) \longrightarrow C(D_{01})$ defined as the inverse procedure of the structure morphism $\chi_1:\ C(D_{11}) \longrightarrow C(D_{01})$.
As their construction, we can consider $Y_1$, $Y_2$ and  $Y_3$  as  the subcomlexes of $C(D)$.  
Then we can confirm directly that $C(D)$ is isomorphic to the direct sum $Y_1 \oplus Y_2 \oplus Y_3$ . The direct calculations of the differentials can also show that  $Y_2$ and  $Y_3$ are contractible complex by their construction and we can naturally calculate that  $Y_1$ is a subcomplex of $C(D)$  and there are natural morphism to $C(D')$ which is a quasi-isomorphism to $C(D')$. 
Then the  complex $C(D)$ can be described 
as $C(D)\cong  Y_1\oplus Y_2 \oplus Y_3$. This finishes the proof of this proposition.

\end{proof}
%%%%%%%%%%%%%%%%%%%%%%%%%%%%%%%%%%%%%%%%%%%%%%%%%%%%%%%%%%%%%%%%%%%%%%%%%%%%%%%%%%%%%%%%%%%%%%%%%%%%%%%%%%%%%%%%%%%%%%%555
%%%%%%%%%%%%%%%%%%%%%%%%%%%%%%%%%%%%%%%%%%%%%%%%%%%%%%%%%%%%%%%%%%%%%%%%%%%%%%%%%%%%%%%%%%%%%%%%%%%%%%%%%%%%%%%%%%%%%%%%555
%%%%%%%%%%%%%%%%%%%%%%%%%%%%%%%%%%%%%%%%%%%%%%%%%%%%%%%%%%%%%%%%%%%%%%%%%%%%%%%%%%%%%%%%%%%%%%%%%%%%%%%%%%%%%%%%%%%%%%%%%%%%%%%%%%%%%%%%%
%%%%%%%%%%%%%%%%%%%%%%%%%%%%%%%%%%%%%%%%%%%%%%%%%%%%%%%%%%%%%%%%%%%%%%%%%%%%%%%%%%%%%%%%%%%%%%%%%%%%%%%%%%%%%%%%%%%%%%%%%%%%%%%%%%%%%%%%%
\subsection{Reidemeister-3}
\begin{figure} [htb] 
\begin{center}
\includegraphics[scale=0.6]{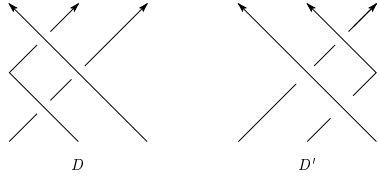}
\end{center}
\caption{Type III move} 
\label{braid1}
\end{figure}

Consider diagrams   $D,\ D'$ which is almost same but different only  in the small area depicted in the Fig.~\ref{braid1}. 

%%%%%%%%%%%%%%%%%%%%%%%%%%%%%%%%%%%%%%%%%%%%%%%%%%%%%%%%%%%%%
We will prove the following proposition. 
\begin{proposition}
The complex $C(D)$ is quasi-isomorphic to $C(D')$ for $D,\ D'$ depicted in the Fig.~\ref{braid1}.
\end{proposition}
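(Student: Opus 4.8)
The plan is to reduce the R3 invariance to the MOY isomorphisms stated above, fed through the same cone-and-cancellation machinery already used for the R1 and R2 moves. Since $D$ and $D'$ agree outside the indicated disk, where each carries three crossings of the braid relation, I would first single out one crossing $a$ inside that disk and split the cube of resolutions of $C(D)$ along $a$, exactly as in the R1 proof. This presents $C(D)$ as the mapping cone of the induced structure morphism $\chi:\ \oplus_{a\notin A}C(D(A))\to\oplus_{a\in B}C(D(B))$. The utility of choosing $a$ carefully is that its two resolutions produce configurations that are each amenable to the tools already in hand: in the $0$-resolved summand the remaining two crossings form a tangle to which the previously established R2 invariance applies, while in the $1$-resolved summand a singular (wide) edge appears alongside the other crossings, opening the way to the MOY decompositions.

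In the $0$-resolved summand I would invoke the R2-move proposition to contract the acyclic part, reusing the subcomplex decomposition $Y_1\oplus Y_2\oplus Y_3$ of Fig.~\ref{reide2res}. In the $1$-resolved summand the essential new ingredient is the square-switch isomorphism $C(\Gamma_1)\oplus C(\Gamma_2)\cong C(\Gamma_3)\oplus C(\Gamma_4)$, supplemented by the digon decomposition $C(\Gamma)\cong C(\Gamma')\{1\}\oplus C(\Gamma')\{-1\}$ and the triple-edge decomposition $C(\Gamma)\cong\oplus_{i=0}^{n-2}C(\Gamma_1)\{2-n+2i\}$. Applying these breaks the wide-edge graph spaces into summands that are manifestly symmetric in the two strands interchanged by the R3 move, and performing Gaussian elimination on the resulting contractible summands collapses the cone down to a small complex.

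The heart of the matter is then that, after these reductions, both $C(D)$ and $C(D')$ collapse to the \emph{same} complex. This is precisely where the braid symmetry of the R3 picture is used: the graphs surviving on the two sides are matched by the square-switch isomorphism, and the surviving arrows are the induced $\chi_0,\chi_1$ maps. I would exhibit an explicit grading-preserving identification of the reduced complex of $C(D)$ with that of $C(D')$ and check that it intertwines these induced differentials, so that the two mapping cones are homotopy equivalent and the quasi-isomorphism follows.

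The main obstacle is \emph{naturality}. The morphisms $\chi_0,\chi_1$ are defined very concretely through distinguished circles and color exchanges, so I must verify that the abstract MOY isomorphisms invoked above are genuinely compatible with these maps — that is, that the square-switch and digon isomorphisms commute, up to the prescribed signs coming from $E_I$ and the parity function $(-1)^{p(x,y)}$, with the structure morphisms of the cube. Establishing this compatibility, and simultaneously tracking the grading shifts $\{k\}$ through every Gaussian elimination, is the delicate bookkeeping on which the whole reduction depends; by contrast, the topological content is carried entirely by the MOY relations and the previously established R2 invariance.
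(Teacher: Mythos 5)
There is a genuine gap, and it is exactly the one you flag at the end but do not close. Your reduction runs through the MOY decompositions (Propositions on digon, square-switch, and the $\oplus_{i=0}^{n-2}C(\Gamma_1)\{2-n+2i\}$ relation) and then performs Gaussian elimination on the cone over the crossing $a$. But in this paper those propositions are stated purely as isomorphisms of \emph{graded modules} for single resolutions, their proofs are omitted, and nothing whatsoever is established about how these isomorphisms interact with the structure morphisms $\chi_0$, $\chi_1$. Gaussian elimination requires more than an abstract module isomorphism at a vertex of the cube: you must know that the differential restricted to the summand you want to cancel is an isomorphism, i.e., you need the decomposition realized by maps that fit into the cube. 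In the Khovanov--Rozansky setting this is available because the MOY decompositions are proved as direct sum decompositions of matrix factorizations together with explicit morphisms; here, with $\chi_0,\chi_1$ defined combinatorially via distinguished circles and color exchanges, the required compatibility would have to be proved from scratch, and it is the hardest part of the argument --- precisely the part your proposal defers. The same issue, in milder form, affects reusing the R2 proposition inside the $0$-resolved summand: one can in principle replace a subcomplex of a cone by a homotopy-equivalent one, but you still need the equivalence to be induced by maps compatible (up to homotopy) with the connecting morphism $\chi$, which again must be exhibited.

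For comparison, the paper's own proof avoids this naturality problem entirely: it never invokes the MOY relations for R3. Instead it decomposes $C(D)$ (and symmetrically $C(D')$) directly into $Y_1\oplus Y_2\oplus Y_3$, where $Y_2$ and $Y_3$ are cut out by explicit inequalities on the colorings of the graphs $\Gamma_{\bullet\bullet\bullet},\Gamma_{\times\bullet\times},\dots$, shown acyclic by checking that their generators map injectively into $\Gamma_{101}$ and $\Gamma_{110}$, and where $Y_1$ contains the submodule $W\subset C(\Gamma_{000})$ and the graphs $(x,\alpha(x))$ twisted by the map $\alpha$ built from the inverse of $\chi_1$. The final identification is an explicit coloring-by-coloring correspondence between $Y_1$ and $Y_1'$, checked to be a chain map. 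If you want to salvage your route, you would need to first upgrade each MOY proposition to a statement about complexes with specified morphisms commuting with $\chi_0,\chi_1$; as the proposal stands, the collapse of both sides ``to the same complex'' rests on unproved compatibilities, so the quasi-isomorphism does not yet follow.
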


%%%%%%%%%%%%%%%%%%%%%%%%%%%%%%%%%%%%%%%%%%%%%%%%%%%%%%%%%%%%%%%5
\begin{proof}
The  complex  $C(D)$ is the total complex of the cube of  $8-$subcomplexes which is shown as in the Fig.~\ref{rthree1} and we  denote the colored graphs of the complex $C(D)$ as $C(\Gamma_{ijk}),$ for $i,\ j,\ k\in \{0,1\}$ in the 
figure~\ref{rthree1}.

\begin{figure} [htb] 
\begin{center}
\includegraphics[scale=0.7]{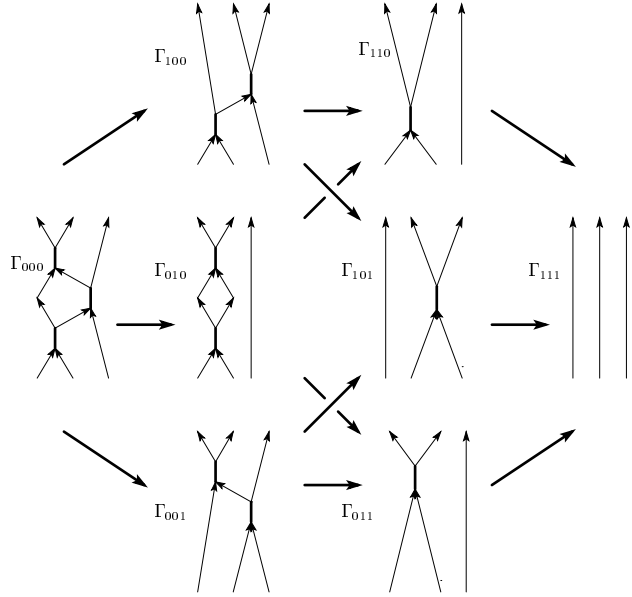}
\end{center}
\caption{Resolution cube of $D$} 
\label{rthree1}
\end{figure}

We   decompose $C(\Gamma_{000})$ into a direct sum of
two sub-modules.  Let $W$ be a submodule of $\Gamma _{000}$ genereted by
\begin{itemize}
\item $\Gamma_{\bullet \bullet \bullet }$  with $(k>j>i)$  and $(i>j>k)$.
\item $\Gamma_{\times \bullet \times }$ with$(j>i>k)$ and $(k>i>j)$.
\item $\Gamma_{\bullet \bullet \bullet }$+$\Gamma_{\times \bullet \times }$ with $(j<k<i)$ and $(i<k<j)$.
\item $\Gamma_{\times \bullet \bullet }$ with $(i>j>k)$ and $(k>j>i) $.
\item $\Gamma_{\bullet  \bullet \times }$ with $(j>i>k)$ and $(k>i>j)$.
\item $\Gamma_{\times \bullet \bullet }$+$\Gamma_{\bullet  \bullet \times }$  with $(j>k>i)$ and  $(i>k>j) $.
\item $\Gamma_{\bullet \times \bullet  } $ , $\Gamma _{\bullet \times \times  } $ ,   $\Gamma _{\times \times \bullet } $ and   $\Gamma _{\times \times \times } $ with all colorings.
\end{itemize}
where  $\Gamma_{\bullet \bullet \bullet }$ ,  $\Gamma_{\bullet  \bullet \times }$ ,  $\Gamma_{\bullet  \times \bullet }$ , $\Gamma_{\times \bullet \bullet }$ ,  $\Gamma _{\bullet \times \times  } $ ,   $\Gamma _{\times \bullet \times  } $ ,   $\Gamma _{\times \times \bullet } $ and   $\Gamma _{\times \times \times } $ are colored graphs  depicted in the  Fig.~\ref{direct-summand}.   Singular edges whose degree are not equal to $0$  will be depicted by black thick line and degree $0$ singular edges will be  depicted by simple crossing as in the Fig.~\ref{direct-summand}.

\begin{figure} [htb] 
\begin{center}
\includegraphics[scale=0.8]{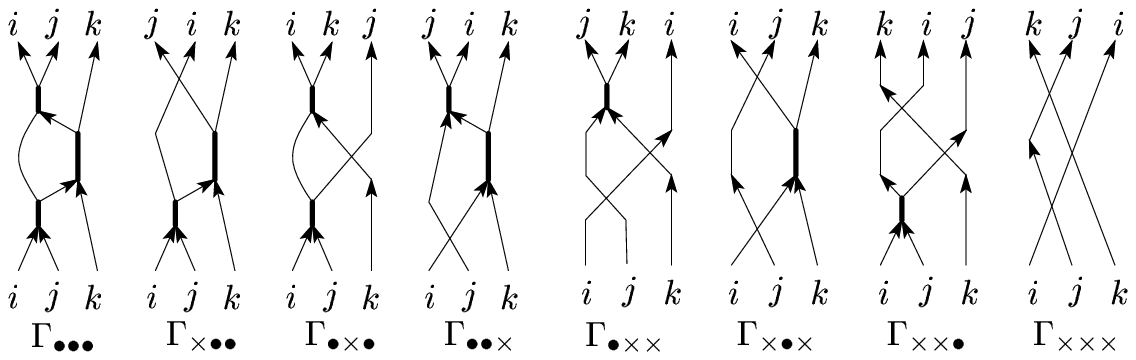}
\end{center}
\caption{colored graphs of $C(\Gamma _{000})$} 
\label{direct-summand}
\end{figure}

Let $\psi $ be a morphism $C(\Gamma _{001}) \longrightarrow C(\Gamma _{000})$ which is defined by the inverse procedure of the structure morphism $\chi_1$ and define $\alpha:\ C(\Gamma _{001}) \longrightarrow C(\Gamma _{010})$ to be the composition of $\psi  $ and structure morphism $\chi _1 :\ C(\Gamma_{000}) \longrightarrow C(\Gamma _{010}) $.\\

Using the submodule $W\subset C(\Gamma _{000}) $ and the morphism $\alpha$, we can decompose $C(D)$ as follows.

Let $Y_1$ be a submodule of $C(D)$  which consists of \\
 $\bullet$  $C(\Gamma _{1ij})$ , for $i,\ j\ \in \{ 0,\ 1\} $, \\
 $\bullet$  submodule  $W\subset C(\Gamma _{000})$ defined above,\\ 
 $\bullet$  $(x, \alpha (x) ),\ $ for $x\in C(\Gamma_{001})$ where the morphism $\alpha$ is defined above. \\
We can induce a differential on $Y_1$ by the differential on $C(D)$ and can check straightforward that they are stable under the induced differential. So $Y_1$ become a subcomplex of $C(D)$.\\

Let $Y_2$ be the  subcomplex of $C(D)$ which is generated by \\
   $\bullet $  $\Gamma_{\bullet \bullet \bullet }$ with $(k<j\ i<j)$ and $(j<i,\ j<k)$,\\
   $\bullet $  $\Gamma_{\times \bullet \times }$ with $(k<j<i)$ and $(i<j<k)$,\\
   $\bullet $  $\Gamma_{\bullet \bullet \times }$ with $(j<i,\ k<i)$ and $(i<j,\ i<k)$.  \\ 
   $\bullet $  $\Gamma_{\times \bullet \bullet}$ with $(j<i\leq k)$ and $(k\leq i<j)$.\\
   
Let $Y_3$  be a subcompex of $C(D)$ which is generated by\\
$\bullet $  $\Gamma_{\bullet \circ \bullet }$ with $(i>j)$,\\
$\bullet  $ $\Gamma _{\bullet \circ \times}$ with $(i<j)$, \\
$\bullet $ $\Gamma _{\times \circ \bullet }$ with $(i>j)$, \\  
$\bullet $ $\Gamma _{\times \circ \times }$ with $(i<j) $ \\
where $\Gamma _{\bullet \circ \bullet }$, $\Gamma _{\bullet \circ \times }$, $\Gamma _{\times \circ \bullet }$ and $\Gamma _{\times \circ \times }$ are colored graphs depicted in the Fig.~\ref{twistout}.\\

\begin{figure} [htb] 
\begin{center}
\includegraphics[scale=0.8]{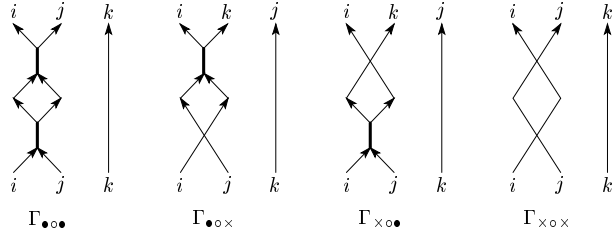}
\end{center}
\caption{generators of $\Gamma_{010}$}
\label{twistout}
\end{figure}

We can induce   differentials on $Y_1,\ Y_2,\ Y_3$ by a differential on $C(D)$   and consider them as subcomlexes of $C(D)$.
There is a natural isomorphism between $Y_1 \oplus Y_2\oplus Y_3$ and  $C(D) $ as abelian groups.
The direct calculation shows   that  $\Gamma_{\bullet \bullet \bullet }$  with $(i<j,\ k<j )\ (i>j,\ k>j)$ , $\Gamma_{\times \bullet \times }$ with $(i<j,\ i<k) \ (i>j,\ i>k)$ , $\Gamma_{\bullet  \bullet \times }$ with $(i<j,\ i<k)\ (i>j,\ i>k)$ and $\Gamma_{\times \bullet \bullet }$ with $(i<j,\ k<j)\ (i>j,\ k>j)$  are mapped  to $\Gamma_{101}$ injectively and this shows that  $Y_2$ is  acyclic complex. 
It is also confirmed directly that the $\Gamma_{\bullet \circ \bullet }$ with $(i>j)$,  $\Gamma _{\times \circ \times }$ with $(i<j)$,  $\Gamma _{\bullet \circ \times }$ with $(i<j)$ and $\Gamma _{\times \circ \bullet }$ with $(i>j)$  are mapped injectively to $\Gamma_{110} $ and this shows that $Y_3$ is acyclic complex. \\

%%----------------------------------------------------------------------------------------------------------------------------------------------------
%%----------------------------------------------------------------------------------------------------------------------------------------------------
%%----------------------------------------------------------------------------------------------------------------------------------------------------
%%----------------------------------------------------------------------------------------------------------------------------------------------------
%%----------------------------------------------------------------------------------------------------------------------------------------------------

Next, we will establish a similar decomposition.
The complex  $C(D')$  also has $8$ subcomplexes and denoted it as 
$\Gamma_{ijk} ' $ for $i,\ j,\ k\in \{0,1\}$ in the 
figure~\ref{rthree2}
\begin{figure} [htb] 
\begin{center}
\includegraphics[scale=0.7]{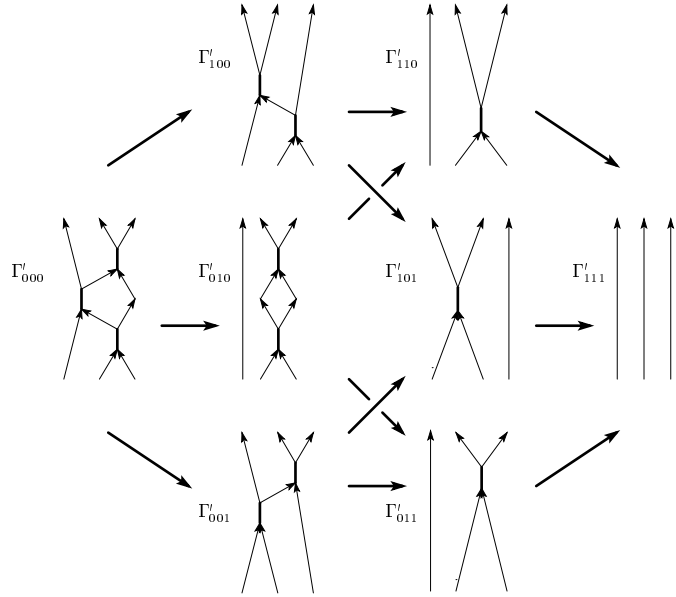}
\end{center}
\caption{Resolution cube of $D'$} 
\label{rthree2}
\end{figure}. 

A similar decomposition for $C(D')$ will also defined as follows.\\

\begin{figure} [htb] 
\begin{center}
\includegraphics[scale=0.8]{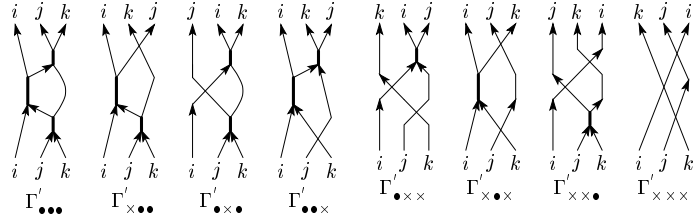}
\end{center}
\caption{ colored graphs $C(\Gamma _{000} ' )$} 
\label{direct-summand2}
\end{figure}

Let $W'$ be a submodule of $\Gamma _{000} '$ generated by 
\begin{itemize}
\item $\Gamma  _{\bullet \bullet \bullet } '$  with $(i> j>k )$  and $(k> j>i)$.
\item $\Gamma _{\times \bullet \times } '$ with$(j> k>i)$ and $(i>k>j)$.
\item $\Gamma  _{\bullet \bullet \bullet } ' + \Gamma _{\times \bullet \times } '$ with $(j>i>k)$ and $(k>i>j)$.
\item $\Gamma  _{\times \bullet \bullet } '$  with $(i> j>k )$  and $(k> j>i)$.
\item $\Gamma _{\bullet  \bullet \times } '$ with $(i>k>j)$ and $(j>k>i)$.
\item $\Gamma  _{\times \bullet \bullet } ' + \Gamma _{\bullet  \bullet \times } '$ with $(j>i>k)$ and $(k>i>j)$.
\item  $\Gamma _{\bullet \times \bullet } '$, $\Gamma _{\bullet \times \times  } '$,   $\Gamma _{\times \times \bullet } '$ and   $\Gamma _{\times \times \times } '$ with all colorings.
\end{itemize}
where  $\Gamma_{\bullet \bullet \bullet } '$, $\Gamma_{\times \bullet \bullet } ' $, $\Gamma _{\bullet \times \bullet } '$, $\Gamma_{\bullet  \bullet \times } '$ ,  $\Gamma _{\bullet \times \times  } '$,   $\Gamma _{\times \bullet \times  } '$,   $\Gamma _{\times \times \bullet } '$ and  $\Gamma _{\times \times \times } '$
are colored graphs  depicted in the  Fig.~\ref{direct-summand2}. \\
Let $\alpha '$ be a morphism $\Gamma _{100} ' \longrightarrow \Gamma _{010} '$ defined similarly as $\alpha$ by a composition of an inverse procedure of structure morphism $\chi_1 :\ \Gamma_{000} ' \longrightarrow \Gamma_{100} '$ and the  structure morphism  $\chi_1 :\ \Gamma_{000} ' \longrightarrow \Gamma_{010} '$ .\\

Let $Y_1 '$ be a subset of $C(D') $  which consists of \\
 $\bullet$  $\Gamma _{1 i j} '$ , for $i,\ j\ \in \{ 0,\ 1\} $, \\
 $\bullet$  submodule  $W ' \subset \Gamma _{000} '$ defined above,\\ 
 $\bullet$  $(x, \alpha ' (x) )$, for $x\in \Gamma_{100} '$ where the morphism $\alpha '$ is defined above. \\
We can induce a differential by the differential on $C(D')$ and  can check straightforward that $Y_1 '$ are stable under induced differential. So $Y_1$ become a subcomplex of $C(D')$.

Let $Y_2 '$ be a submodule of $C(D')$ which is generated by \\
 $\bullet $  $\Gamma_{\bullet \bullet \bullet } '$ with $(k<j,\ i<j)$ and $(j<i,\ j<k)$,\\
 $\bullet $  $\Gamma_{\times \bullet \times }$ with $(k<j<i)$ and $(i<j<k)$,\\
 $\bullet $  $\Gamma_{\bullet \bullet \times }$ with $( k<i,\ k<j)$ and $(i<k,\ j<k)$.  \\ 
 $\bullet $  $\Gamma_{\times \bullet \bullet}$ with $(i\leq k<j)$ and $(j\leq k<i)$.\\

\begin{figure} [htb] 
\begin{center}
\includegraphics[scale=0.8]{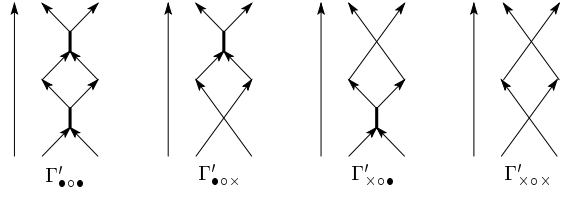}
\end{center}
\caption{generators of $\Gamma_{010} '$}
\label{twistout2}
\end{figure}

Let $Y_3 '$  be a subcompex of $C(D')$ which is generated by\\
$\bullet $  $\Gamma_{\bullet \circ \bullet } '$ with $(j>k)$,\\
$\bullet  $ $\Gamma _{\bullet \circ \times} '$ with $(j<k)$, \\
$\bullet $ $\Gamma _{\times \circ \bullet } '$ with $(j>k)$, \\  
$\bullet $ $\Gamma _{\times \circ \times } '$ with $(j<k) $ \\
where $\Gamma _{\bullet \circ \bullet } '$, $\Gamma _{\bullet \circ \times } '$, $\Gamma _{\times \circ \bullet } '$ and $\Gamma _{\times \circ \times } '$ are colored graphs depicted in the Fig.~\ref{twistout2}.

As in the $C(D)$ case, direct calculations of the differentials  show that  $Y_2 '$ and $Y_3 '$ are acyclic complexes.

Thus, we can twist out the acyclic complexes from $C(D)$ and $C(D')$. Therefore we can obtain a reduced complex $Y_1$ (resp. $Y_1 '$ )  from $C(D)$ ( resp. $C(D')$  ).

Next, we will make a correspondence graphically between $Y_1$ and $Y_1 '$ which induces a quasi-isomorphism between them.

For  $\Gamma_{\bullet \bullet \bullet }$  $(k>j>i)$, we associate to $\Gamma_{\bullet \bullet \bullet }' $   $(k>j>i)$ . We associate  for $\Gamma_{\bullet \bullet \bullet }$  $(i>j>k)$ to $\Gamma_{\bullet \bullet \bullet }'$  $(i>j>k)$.\\
For  $\Gamma_{\times \bullet \times }$ $(j>i>k)$ , we associate to $\Gamma_{\bullet \bullet \bullet } ' + \Gamma_{\times \bullet \times }'$   $(j>i>k)$.  We associate for     $\Gamma_{\times \bullet \times }$   $(k>i>j)$ to  $\Gamma_{\bullet \bullet \bullet }$+$\Gamma_{\times \bullet \times }$  $(k>i>j)$.\\
For  $\Gamma_{\bullet \bullet \bullet }+ \Gamma_{\times \bullet \times }$  $(j<k<i)$, we associate $\Gamma_{\times \bullet \times }'$  $(j<k<i)$. We associate for  $\Gamma_{\bullet \bullet \bullet }+ \Gamma_{\times \bullet \times }$   $(i<k<j)$ to  $\Gamma_{\times \bullet \times }'$  $(i<k<j)$ .\\
For $\Gamma_{\times \bullet \bullet }$  $(i>j>k)$  , we associate $\Gamma_{\bullet \times \bullet } '$  $(i>j>k) $. We associate for $\Gamma_{\times \bullet \bullet }$  $(k>j>i) $ to $\Gamma_{\bullet \times \bullet } '$   $(k>j>i)$. \\
For $\Gamma_{\bullet  \bullet \times }$  $(j>i>k)$, we associate $\Gamma_{\bullet \times \bullet } '$ $(j>i>k)$ . We associate for $\Gamma_{\bullet  \bullet \times }$ $(k>i>j)$ to $\Gamma_{\bullet \times \bullet } '$ $(k>i>j)$.\\
For $\Gamma_{\times \bullet \bullet }$+$\Gamma_{\bullet  \bullet \times }$   $(j>k>i)$, we associate  $\Gamma_{\bullet  \times  \bullet } '$  $(j>k>i)$. We associate for  $\Gamma_{\times \bullet \bullet }$+$\Gamma_{\bullet  \bullet \times }$ to $\Gamma_{\bullet t \times  \bullet } '$  $(i>k>j) $.\\
For  $\Gamma_{\bullet \times \bullet  } $ $(j>k>i)$ ,  we associate $\Gamma _{\bullet \bullet \times } '$  $(j>k>i)$.\\
For  $\Gamma_{\bullet \times \bullet  } $ $(i>k>j)$,   we associate $\Gamma _{\bullet \bullet \times } '$  $(i>k>j)$.\\
For  $\Gamma_{\bullet \times \bullet  } $ $(k>j>i)$,   we associate $\Gamma _{\times \bullet \bullet } '$  $(k>j>i)$.\\
For  $\Gamma_{\bullet \times \bullet  } $ $(i>j>k)$,   we associate $\Gamma _{\times \bullet \bullet } '$  $(i>j>k)$.\\
For  $\Gamma_{\bullet \times \bullet  } $ $(k>i>j)$,   we associate $\Gamma _{\bullet \bullet \times } ' + \Gamma _{\times \bullet \bullet } '$  $(k>i>j)$.\\
For  $\Gamma_{\bullet \times \bullet  } $ $(j>i>k)$,   we associate $\Gamma _{\bullet \bullet \times } ' + \Gamma _{\times \bullet \bullet } '$  $(j>i>k)$.\\
For $\Gamma _{\bullet \times \times  } $ , we can  associate naturally $\Gamma_{\times \times \bullet} '$.\\
For  $\Gamma _{\times \times \bullet } $ , we  can associate naturally $\Gamma _{\bullet \times \times} '$.\\
For  $\Gamma _{\times \times \times } $, we can  associate  naturally $\Gamma _{\times \times \times} '$ .\\
We can construct a natural correspondence between  $\Gamma _{1 i j}$ and $\Gamma _{1 i j}$, and also between $\{ (x,\ \alpha (x)) \  x \in  \Gamma _{001} \}$ and $\{ (y,\ \alpha ' (y)  ) \ y\in \Gamma _{100} ' \} $.

We can directly confirm that the above correspondence can  be seen as chain homomorphisms  $  Y_1 \longrightarrow \ Y_1 ' $ and   $  Y_1 ' \longrightarrow \ Y_1 $ and this chain homomorphisms give a homotopy equivalence . Therefore we conclude that our cohomology is invariant under the Reidemeister-move III
\end{proof}

This completes the proof that $H\langle G \rangle$
is  invariant under all the Reidemeister moves.

\section{Applications and generalizations}
We sketch here an application and  generalize  to a graphically defined infinite dimensional complex.
One application of  these graphical cohomology is to establish dualities between  links by constructing non-degenerate pairings.

\begin{theorem}
Let $L$ be a link. Denote by$-L$ the link obtained from $L$ by reversing the orientation of every component of $L$ and $\bar{L}$  the 
link obtained from $L$ by reversing the orientation of every component of $L$ and
switching the upper- and lower-branches at each crossing.  Then there are a  non-degenerate pairing
$H(L, k)\otimes _{k} H(\bar{L}, k ) \Longrightarrow k$ where $k$ means a field and a isomorphism $H(L,k)\cong H(-L,k)$.

\end{theorem}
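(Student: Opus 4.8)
The plan is to establish both assertions on the level of complexes, for one chosen braid-closure diagram $D$, and then to pass to homology over the field $k$; by the invariance theorem already proved this loses no generality. Write $C^{*}(L)=(C^{i,j},d)$ for the bigraded complex $C^{*}(D)$, with $i$ the cohomological and $j$ the quantum grading. I would treat the isomorphism $H(L,k)\cong H(-L,k)$ first, as the model for the harder statement. The diagram of $-L$ is $D$ with every orientation reversed, so its crossing signs, its writhe, and the counts $N_0^{+},N_1^{-}$ are unchanged, and each resolution $D(cr)$ goes to the same graph with reversed edge orientations. The candidate map is colour conjugation $c\colon\sigma\mapsto\bar\sigma$, $\bar\sigma(e)=n+1-\sigma(e)$, applied to every state. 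First I would check that $c$ preserves $\mathrm{gr}$: reversal of orientation sends $\mathrm{rot}(C)\mapsto-\mathrm{rot}(C)$ while conjugation gives $2\bar\sigma(C)-n-1=-(2\sigma(C)-n-1)$, so the rotation term is fixed, and each local degree $\mathrm{deg}(E)$ is fixed for the same reason (the order relation $<\leftrightarrow>$ and the leg/head exchange each contribute a sign). Next I would verify that $c$ intertwines the structure maps $\chi_0,\chi_1$, which reduces to the observation that the distinguished-circle colour-exchange operation and the Reidemeister-I exceptional terms are equivariant under $x\mapsto n+1-x$. Hence $c$ is an isomorphism of complexes and induces $H(L,k)\cong H(-L,k)$.

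For the pairing I would build a perfect pairing already on chains and then dualize. Passing from $L$ to $\bar L$ reverses orientation and switches the over/under strand at each crossing, so every crossing sign is reversed and the $0$- and $1$-resolutions trade places; concretely there is a canonical identification of resolutions $D(cr)\leftrightarrow\bar D(Cr\setminus cr)$. I define $\langle\;,\;\rangle\colon C^{*}(L)\otimes_k C^{*}(\bar L)\to k$ on generators by $\langle\sigma,\tau\rangle=1$ when $\tau$ carries the same colours as $\sigma$ on the identified mirror resolution and $\langle\sigma,\tau\rangle=0$ otherwise; this is plainly perfect. The crucial bookkeeping is that it reverses both gradings. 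The cohomological degree $N_1^{+}-N_0^{-}$ on $D(cr)$ equals its negative on $\bar D(Cr\setminus cr)$ after the sign flip; the constant part $N_0^{+}-N_1^{-}-wr(D)\cdot n$ negates for the same reason together with $wr(\bar D)=-wr(D)$; and the state-dependent terms negate because reversing orientation flips every $\mathrm{rot}(C)$ and every $\mathrm{deg}(E)$ while the colours, hence the factors $2\sigma(C)-n-1$, are untouched. Thus $\langle\;,\;\rangle$ is a perfect pairing of $C^{i,j}(L)$ with $C^{-i,-j}(\bar L)$, which is precisely why the statement is phrased with $\bar L$ rather than the bare mirror.

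The heart of the matter is that the two differentials are mutually adjoint under this pairing, equivalently that the identification of resolutions carries $C^{*}(\bar L)$ isomorphically onto the graded $k$-dual $\big(C^{*}(L)\big)^{*}$ with both gradings negated. Because the mirror reverses the direction of every cube edge, the cube map $\chi_0$ of $L$ at a positive crossing $a$ and the cube map $\chi_1$ of $\bar L$ at the same, now negative, crossing point in opposite directions, and the claim to verify is $\langle\chi_0\sigma,\tau\rangle=\pm\langle\sigma,\chi_1\tau\rangle$, with the symmetric identity exchanging the roles of $\chi_0$ and $\chi_1$. This is exactly the sense in which the two recipes are meant to be inverse to one another: $\chi_1$ deletes a singular edge after a degree-preserving colour exchange on a distinguished circle whose source and target are non-positive, while $\chi_0$ creates a singular edge after the dual exchange. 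Granting the adjointness, over the field $k$ the dual of a complex has homology the dual of the homology, so $H^{i,j}(\bar L,k)\cong\big(H^{-i,-j}(L,k)\big)^{*}$, and summing the tautological evaluation maps $H^{-i,-j}(L,k)\otimes H^{i,j}(\bar L,k)\to k$ over all $(i,j)$ produces the non-degenerate pairing $H(L,k)\otimes_k H(\bar L,k)\to k$.

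The step I expect to be the main obstacle is precisely this adjointness of $\chi_0$ and $\chi_1$. Their definitions are genuinely asymmetric: each carries its own Reidemeister-I exceptional loop term and its own signs inherited from the orderings $E(\L)$, so the transpose of one is not literally the other. The argument will require matching, crossing by crossing, the distinguished circles used by $\chi_0$ on a resolution of $L$ with those used by $\chi_1$ on the identified mirror resolution of $\bar L$, and checking that the exceptional terms and every accompanying sign correspond. Controlling those signs, rather than the combinatorics of the colour exchange itself, is where the real work lies; the commutativity already verified in the cube-structure proposition should supply the local model for these checks.
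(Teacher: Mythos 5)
Your proposal follows the same two-part outline as the paper's proof, but it is executed quite differently in both halves, and in each case your version is the more careful one. For the pairing, the paper's entire argument is the assertion that $C(\bar{L},k)$ ``can be naturally identified'' with $\Hom(C(L),k)$, followed by standard homological algebra over a field; your explicit generator-wise pairing under the identification $D(cr)\leftrightarrow \bar{D}(Cr\setminus cr)$ is exactly this natural identification made concrete, including the verification (absent from the paper) that both the cohomological and quantum gradings are negated. Neither you nor the paper actually proves the crucial point that the identification is a chain map, i.e.\ the adjointness $\langle\chi_0\sigma,\tau\rangle=\pm\langle\sigma,\chi_1\tau\rangle$; but you explicitly isolate it as the outstanding step, whereas the paper silently subsumes it in the word ``naturally.'' For $H(L,k)\cong H(-L,k)$ the routes genuinely diverge: the paper uses the identity map on states, claiming the structure morphisms ``do not depend on the orientations,'' while you use colour conjugation $\sigma\mapsto n+1-\sigma$. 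Here your choice is not just different but necessary: reversing all orientations negates every $\mathrm{rot}(C)$ and every $\mathrm{deg}(E)$ while leaving the constant terms and crossing signs alone, so the identity map does \emph{not} preserve $\mathrm{gr}$, and the paper's argument has a gap exactly where your conjugation check ($2\bar{\sigma}(C)-n-1=-(2\sigma(C)-n-1)$, and the order reversal compensating the leg/head exchange) repairs it. In short, your proposal is at least as rigorous as the paper's own proof; what remains open in your write-up (the $\chi_0$/$\chi_1$ adjointness, and the equivariance of the exceptional Reidemeister-I terms under conjugation) is also unproved in the paper.
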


\begin{proof}
Consider the chains $C(L),\ C(-L)$. Denote $D$ be a diagram of $L$ which is presented by a braid form.  By reversing the orientation of $L$, we can naturally make an isomorphism between $C(D)$ and $C(-D)$ as a graded module because reversing orientations does not change the crossing-type of $D$ and $-D$. But our structure morphism of cube does not depend on the orientations of underlying graphs. So the differentials also unchanged by reversing the orientations. Thus  we have an isomorphism $H(L,k)\cong H(-L,k)$.

$C(\bar{L} ,k)$ can be naturally identified to $Hom(C(L)),k$  where $k$ is a coefficient ring. By the standard homological algebra, we can identify the cohomologies of  $C(L,k)$ and $Hom(C(L),k)$. Thus we have a non-degenerate parings $H(L,k)\otimes H(\bar{L} ,k) \Longrightarrow k$.
\end{proof}

As a consequence we can conclude that the following formula. 
\begin{proposition}
Let $L $ be a link and $L'$ be a
link obtained from $L$ by  switching the upper- and lower-branches at each crossing.
If $L=\bar{L} $ or $L=L'$ , we have a non-degenerate paring $H(L, k)\otimes _{k} H({L}, k ) \Longrightarrow k$ 
\end{proposition}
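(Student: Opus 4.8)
The plan is to deduce both cases directly from the preceding theorem, which already supplies the non-degenerate pairing $H(L,k)\otimes_{k} H(\bar{L},k)\Longrightarrow k$ together with the isomorphism $H(L,k)\cong H(-L,k)$. No new homological input is needed; the work is entirely in matching the hypotheses to these two outputs.

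In the case $L=\bar{L}$ there is essentially nothing to do: substituting $\bar{L}=L$ into the pairing furnished by the theorem immediately yields a non-degenerate pairing $H(L,k)\otimes_{k} H(L,k)\Longrightarrow k$.

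The case $L=L'$ requires one identification at the level of diagrams. I would first note that the two operations used to form $\bar{L}$, namely reversing the orientation of every component and switching the upper- and lower-branches at every crossing, are independent and commute; hence $\bar{L}=-(L')$, the orientation reversal of the branch-switched link. Under the hypothesis $L=L'$ this gives $\bar{L}=-L$, and composing with the isomorphism $H(-L,k)\cong H(L,k)$ of the theorem produces an isomorphism $\phi\colon H(L,k)\to H(\bar{L},k)$ of graded $k$-vector spaces.

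Finally I would transport the pairing along $\phi$: setting $\langle\langle x,y\rangle\rangle:=\langle x,\phi(y)\rangle$, where $\langle-,-\rangle$ denotes the pairing of the theorem, defines a bilinear form $H(L,k)\otimes_{k} H(L,k)\Longrightarrow k$. Because $\phi$ is an isomorphism and $\langle-,-\rangle$ is non-degenerate, precomposition in the second variable by $\phi$ preserves non-degeneracy, giving the asserted pairing. The only point that demands care, and the main (mild) obstacle, is verifying that the diagram-level identity $\bar{L}=-L'$ is genuinely respected by the construction of $C^{*}(\cdot)$, i.e. that the orientation-reversal isomorphism and the branch-switching duality of the theorem are compatible when performed in either order; once this bookkeeping is settled, non-degeneracy of the resulting form is automatic.
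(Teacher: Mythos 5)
Your proposal is correct and follows exactly the route the paper intends: the paper states this proposition without proof as an immediate consequence of the preceding theorem, and your argument---direct substitution in the case $L=\bar{L}$, and in the case $L=L'$ using the commutativity of orientation reversal with branch switching to get $\bar{L}=-L$, then transporting the pairing along the isomorphism $H(L,k)\cong H(-L,k)$---is precisely that deduction, carried out with the details made explicit.
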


We can consider a generalization of this graphically defined cohomology to infinite dimensional cohomology as the $HOMFLY$ polynomial can be obtained by generalizing the quantum $\mathfrak{sl}(n)$ polynomial invariant motivated by the skein theory.
The key observations is that the $HOMFLY$ polynomial is obtained by generalizing the quantum polynomial $\mathfrak{sl(n)}$ invariant based on the skein theory and that  the propositions  $3.6-3.7$, the [MOY]-relations, and the  proof of the invariance under Reidemeiser moves $II,\ III$ almost unaffected by the choice of $n$ which is the size of colorings (or the index of quantum group $\mathfrak{sl}(n)$ ).
We state here a non-modified definition of generalized complex but to construct a link homology, we must modify the definition. 

Let $B_n$ be a $n-$strands braid group and   $b_i \in B_n$ with $(1\leq i\leq n-1)$  be the standard generators and $b=(b_{i_1} ^{j_i} \cdots b_{i_k} ^{j_k})\in B_n$ be an element. We use graphical braid presentations as usual.
We will assign a complex to  $b$ which is invariant under the Reidemeister move II, III. 
%Let $\mathbb{N}$ be a set of natural numbers and define a 2-variable function by $\phi(a,\ b):=1$ if $a>b$, $\phi(a,\ b):=0$ if $a<b$ for distinct elements of $a,\ b\in \mathbb{N}$.   
We resolve the crossing of $b$ by the same rule of the $0,\ 1$ resolutions of crossings of a braid closure diagram. (See the Fig.~\ref{01-res}.)
Let $G$ be a oriented trivalent planar diagram obtained by the resolutions of each crossing of $b$. A state of $G$ is a function from the set of normal edges of $G$ to $\mathbb{N}$.
It should satisfy the following conditions that for each singular edge, the set of elements of $\mathbb{N}$ attached to the legs of the singular edge is equal to the set of elements of $\mathbb{N}$ attached to the heads of the singular edge and the elements attached to the legs (resp. heads) must be  different from each other. Furthermore the states must have same colorings at the initial  and the terminal points of the braid diagram(i.e., the state can be extended to  the closure of  braid  diagram). 
Given a state of $G$, we can take the  $n$-colored paths by considering the braid as the $n$-paths. If the strand has been attached $i$ by the state, we define the quantum degree of colored strand as $2i$. 
We define the  quantum degree of  a singular edge.
Let $E$ be a singular edge of $G$ appearing in the $1$-resolution of positive crossing as in figure~\ref{ws}. 
\begin{gather}
\mathrm{deg} (\ E \ )=
\begin{cases}
0 & \sigma (e_1)= \sigma (e_3)<\sigma (e_2) = \sigma (e_4), \\
1 & \sigma(e_1) = \sigma (e_4),\  \sigma (e_2) = \sigma (e_3), \\
2 & \sigma (e_1)= \sigma (e_3)>\sigma (e_2) = \sigma (e_4).
\end{cases}
\end{gather}

Let $E'$ be a singular edge of $G$ appearing in the $0$-resolution of negative crossing as in figure~\ref{ws}. 
\begin{gather}
\mathrm{deg} (\ E' \ )=
\begin{cases}
-2 & \sigma (e_1)= \sigma (e_3)<\sigma (e_2) = \sigma (e_4), \\
1 & \sigma(e_1) = \sigma (e_4),\  \sigma (e_2) = \sigma (e_3), \\
0 & \sigma (e_1)= \sigma (e_3)>\sigma (e_2) = \sigma (e_4).
\end{cases}
\end{gather}
This definition of quantum degree is the modified one defined in the definition $3.3$.
We also put  the quantum degree on the parallel normal edges appearing in the $0$-resolutions of positive crossings and  the $1$-resolutions of negative crossings. 
Let $P_0$, $N_1$ be th number of $0$-resolutions of positive crossings and $1$-resolutions of negative crossings respectively. Then we shift the quantum degree $2P_0 - 2N_1$.
This shift can be thought that  the $0$-resolutions of positive crossings have quantum degree $+2$ and the $1$-resolutions of negative crossings have quantum degree $-2$.
Then we define the quantum degree of $\sigma$ as 

$$
{deg (\sigma )}= (2P_0- 2N_1)+ \sum_{singular\ edges }deg(E)
++\sum_{i;\ colorings\ of\ strands} 2i
$$
Let $\tilde{ C(G)}$ be a graded  $\Z$-module whose generators are  the state of $G$ and the degrees are induced by the above  formula.
We also define the structure morphisms between them.  In this infinite dimensional graphical complex, we can also define the distinguished circles, the distinguished subgraphs and  the coloring exchange of distinguished circle or distinguished subgraph of $G$. The morphism corresponding to  $\chi_1$ is defined to  coloring exchanging  of the distinguished circle (without modification term) and the morphism corresponding to $\chi_0$ is defined to coloring exchanging of the distinguished subgraph.
It is confirmed similarly that the these graded modules and the  morphisms consist a cube. 
The propositions $3.6-3.7$  and the Reidemeister move$II,\ III$ also hold by the almost same arguments in the previous section. Thus we obtain a categorical braid representation.
  
\begin{theorem}
$\tilde{C(G)}$ and the structure morphisms define a chain complex which is invariant under the Reidemeister-moves II, III.
\end{theorem}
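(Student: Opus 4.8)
The plan is to reduce this statement to the three arguments already carried out in Section~4, checking at each step that none of them used either the finiteness of the color set or the particular value of $n$. I would first establish that $\tilde{C(G)}$ together with the morphisms $\chi_0,\ \chi_1$ forms a commutative $Cr$-cube, exactly as in the proposition asserting the cube structure. The only point to verify is that for two crossings $a\neq b$ the associated color exchanges commute: when the distinguished circles (or distinguished subgraphs) of $E_a$ and $E_b$ are disjoint this is immediate, since each exchange alters only the colorings along its own circle; when they share singular edges, the degree-preservation condition built into the definition of the structure morphism forces the two procedures to be order-independent. This argument is purely local and never refers to $n$, so it transfers verbatim. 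Tensoring the resulting cube with $E_{I}$ as in Section~3.1 yields a skew cube whose total complex $(\tilde{C}^{\bullet},d)$ satisfies $d^2=0$, which gives the chain complex.

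For invariance under Reidemeister~II I would mirror the proof of the corresponding proposition. The move involves only two crossings, so I consider the four local resolutions $D_{00},D_{10},D_{01},D_{11}$ and the induced morphisms between the corresponding summands. I define $\psi$ as the inverse procedure of $\chi_1$, set $\alpha=\chi_0\circ\psi$, and define $\beta$ analogously on the $D_{11}$ side. One then decomposes $\tilde{C}(D)$ as a direct sum $Y_1\oplus Y_2\oplus Y_3$, with $Y_1=\{(x,\alpha(x))\}$, $Y_2$ generated by the $D_{00}$ summand, and $Y_3=\{(\beta(x),y)\}$. Computing the induced differentials shows $Y_2,Y_3$ contractible and $Y_1$ quasi-isomorphic to $\tilde{C}(D')$. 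Since the local combinatorics at the two crossings, and hence the definitions of $\psi,\alpha,\beta$ and of the color exchanges, are identical to the finite case and do not invoke $n$, the decomposition and the contractibility calculations go through unchanged.

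For Reidemeister~III I would again follow the proof of the corresponding proposition. I decompose the total complexes $\tilde{C}(D)$ and $\tilde{C}(D')$ over the eight vertices $\Gamma_{ijk},\ \Gamma_{ijk}'$ into direct sums $Y_1\oplus Y_2\oplus Y_3$ and $Y_1'\oplus Y_2'\oplus Y_3'$, using the same submodule $W\subset C(\Gamma_{000})$, the same morphism $\alpha$, and the same lists of generators as in Section~4.3. The injectivity of the maps into $\Gamma_{101}$ and $\Gamma_{110}$ establishes acyclicity of $Y_2,Y_3$ and of their primed analogues, which I then twist out. Finally I reproduce the explicit generator-by-generator correspondence between $Y_1$ and $Y_1'$ and check directly that it is a chain map in both directions inducing a homotopy equivalence. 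All of these identifications are combinatorial correspondences of colored graphs respecting the modified degrees, hence insensitive to the enlargement of the color set to $\mathbb{N}$.

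The main obstacle I anticipate is not any single computation but the bookkeeping needed to confirm genuine $n$-independence. Concretely, one must check that (i) the color-exchange operation on a distinguished circle remains well-defined and degree-preserving when infinitely many colorings are available, so that the finitely many local degree changes recorded by the modified singular-edge degrees still sum to zero; (ii) the injectivity used to force $Y_2,Y_3$ to be acyclic does not secretly rely on the bound $n$ on colorings; and (iii) the R3 correspondence remains a bijection on generators after the color set is enlarged. I would also stress that invariance under Reidemeister~I is deliberately \emph{not} claimed: the exceptional (modification) term in $\chi_1$ attached to the R1 move has been dropped in the generalized construction, so only the II and III moves survive, which is exactly the expected behavior for the $HOMFLY$ normalization.
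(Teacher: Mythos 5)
Your proposal is correct and takes essentially the same route as the paper: the paper's own (very terse) justification is precisely that the cube-structure argument and the Reidemeister II and III decompositions of Section 4 carry over to $\tilde{C(G)}$ unchanged, since none of those arguments depends on the bound $n$ on the color set. Your closing remark that Reidemeister I fails exactly because the exceptional modification term of $\chi_1$ is dropped in the generalized construction also matches the paper's own discussion.
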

If we modify the $\tilde{C(G)}$, we can obtain a link homology which is invariant under the Reidemeister-moves I, II, III. But we do  not pursue these things  in this paper.

\section*{Acknowledgments}
Many people have made comments on earlier versions of the paper.
I would like to thank M. Harada, Y. Hashimoto, M. Masuda, A. Kawauchi and T.Tanisaki for useful discussions and comments.

\textsc{Osaka City University} 

\textit{E-mail address:} 
\textsf{d07sag0702@ex.media.osaka-cu.ac.jp}

\end{document}